\newtheorem{theorem}{Theorem}
\newtheorem{definition}{Definition}
\newtheorem{lemma}{Lemma}
\newcommand{\dw}{\mathsf{D}^{(1)}_{\mathsf{W}}}
\newcommand{\dwtwo}{\mathsf{D}^{(2)}_{\mathsf{W}}}
\newcommand{\perm}{\mathcal{S}}
\newtheorem{assumption}{Assumption}
\newcommand{\order}[1]{O\left(#1\right)}
\newcommand{\otilde}[1]{\widetilde{O}\left(#1\right)}
\newcommand{\sgdwor}{SGDo\xspace}
\newcommand{\sgd}{SGD\xspace}
\newcommand{\norm}[1]{\left\| #1 \right\|}
\newcommand{\defeq}{\stackrel{\mathrm{def}}{=}}
\newcommand{\R}{\mathbb{R}}
\title{SGD without Replacement: Sharper Rates for General Smooth Convex Functions}
\author{
   Prateek Jain \\
  Microsoft Research\\
  Bengaluru, India \\
  \texttt{prajain@microsoft.com} \\
  \And
  Dheeraj Nagaraj\thanks{} \\
  Department of Electrical Engineering and Computer Science\\
  Massachusetts Institute of Technology\\
  Cambridge, USA 02139 \\
  \texttt{dheeraj@mit.edu} \\
   \And
  Praneeth Netrapalli \\
  Microsoft Research\\
  Bengaluru, India \\
  \texttt{praneeth@microsoft.com} \\
}
\begin{document}
\maketitle

\begin{abstract}
We study stochastic gradient descent {\em without replacement} (\sgdwor) for smooth convex functions. \sgdwor is widely observed to converge faster than true \sgd where each sample is drawn independently {\em with replacement}~\cite{bottou2009curiously} and hence, is more popular in practice. But it's convergence properties are not well understood as sampling without replacement leads to coupling between iterates and gradients. By using method of exchangeable pairs to bound Wasserstein distance, we provide the first non-asymptotic results for \sgdwor when applied to {\em general smooth, strongly-convex} functions. In particular, we show that \sgdwor converges at a rate of $O(1/K^2)$ while \sgd~is known to converge at $O(1/K)$ rate, where $K$ denotes the number of passes over data and is required to be {\em large enough}. Existing results for \sgdwor in this setting require additional {\em Hessian Lipschitz assumption}~\cite{gurbuzbalaban2015random,haochen2018random}. 

	For {\em small} $K$, we show \sgdwor can achieve same convergence rate as \sgd for {\em general smooth strongly-convex} functions. Existing results in this setting require $K=1$ and hold only for generalized linear models \cite{shamir2016without}. 	 In addition, by careful analysis of the coupling, for both large and small $K$, we obtain better dependence on problem dependent parameters like condition number. 
\end{abstract}

\keywords{Stochastic Gradient Descent \and Random Reshuffling \and Machine Learning \and Convex Optimization \and Without Replacement Sampling}

\section{Introduction}


In this paper, we study the standard finite sum optimization problem that arises in most machine learning based optimization problems: 
$$F(x) := \frac{1}{n}\sum_{i=1}^n f(x;i),$$
where $f(x;i):\mathbb{R}^d \rightarrow \mathbb{R}$ is the $i$-th component function. For example, in standard ERM and deep learning training $f(x; i)$ denotes the loss function w.r.t. the $i$-th data point. Stochastic Gradient Descent (SGD), originally proposed by \cite{robbins1951textordfemininea}, has emerged as one of the most popular techniques to solve this problem. 

At $t$-th step, SGD updates the iterate by $x_{t+1}=x_t-\eta \nabla f(x_t;i_t)$ where $\nabla f(x_t;i_t)$ is the gradient of $f(x_t;i_t)$ and $i_t$ is selected uniformly at random {\em with replacement} yielding $\mathbb{E}_{i_t}[\nabla f(x_t; i_t)]=\nabla F(x_t)$.  SGD has been extensively studied in literature and a vast number of results are known in many different settings, most prominent being that of convex optimization~\cite{benaim1999dynamics,borkar2009stochastic,kushner2003stochastic,bubeck2015convex,bottou2018optimization}.
\begin{table*}[ht!]
	\vskip 0.15in
	\begin{center}
		\begin{small}
			\begin{sc}
				\begin{tabular}{|l|c|c|c|r|}
					\toprule
					Paper & Guarantee & Assumptions & Step Sizes  \\
					\midrule
					\cite{gurbuzbalaban2015random}& $O\left(\tfrac{C(n,d)}{K^2}\right)$& \multirow{ 2}{*}{\begin{tabular}{@{}c@{}}\scriptsize Lipschitz, Strong convexity \\ \scriptsize Smoothness, \textbf{Hessian Lipschitz} \\ \scriptsize $K > \kappa^{1.5} \sqrt{n}$ \end{tabular}}& $\frac{1}{K}$ \\
					\cline{1-2} \cline{4-4}
					\cite{haochen2018random} & $\tilde{O}\left(\tfrac{1}{n^2K^2} + \tfrac{1}{K^3} \right)$ & &$\frac{\log{nK}}{\mu nK}$\\
					\midrule
					\textbf{This paper, Theorem~\ref{thm:main_result_2}} & $\tilde{O}\left(\tfrac{1}{nK^2}\right)$ &\shortstack{\begin{tabular}{@{}c@{}}\scriptsize Lipschitz, Strong convexity \\ \scriptsize Smoothness, $K > \kappa^{2}\log{nK}$ \end{tabular}} & $\frac{\log{nK}}{\mu nK}$\\
										\midrule
										\midrule
					\cite{shamir2016without} & $O\left(\tfrac{1}{nK}\right)$ & \shortstack{{\scriptsize Lipschitz, Strong convexity}\\ {\scriptsize Smoothness} \\ {\scriptsize \textbf{Generalized Linear Function}, $K=1$ }} & $\frac{1}{\mu nK}$  \\
					\midrule
					\textbf{This Paper, Theorem~\ref{thm:main_result_3}} & $O\left(\tfrac{1}{nK} \right)$ & \shortstack{{\scriptsize Lipschitz, Strong convexity}\\ {\scriptsize Smoothness} } & $\min\left(\tfrac{2}{L}, \tfrac{\log{nK}}{\mu nK}\right) $\\
					\midrule
					\midrule
					\cite{shamir2016without} & $O\left(\tfrac{1}{\sqrt{nK}}\right)$ & \shortstack{{\scriptsize Lipschitz}\\ {\scriptsize \textbf{Generalized Linear Function}, $K=1$ }} & $\frac{1}{\sqrt{nK}}$  \\
					\midrule
					\textbf{This Paper, Theorem~\ref{thm:main_result_1}} & $O\left(\tfrac{1}{\sqrt{nK}} \right)$ & \shortstack{{\scriptsize Lipschitz, \textbf{Smoothness}}} & $\min\left(\tfrac{2}{L},\tfrac{1}{\sqrt{nK}}\right) $\\
					\bottomrule
				\end{tabular}
			\end{sc}
		\end{small}
	\end{center}
	\caption{Comparison of our results with previously known results in terms of number of functions $n$ and number of epochs $K$. For simplicity, we suppress the dependence on other problem dependent parameters such as Lipschitz constant, strong convexity, smoothness etc. These dependencies are clearly stated in Theorems~\ref{thm:main_result_2},~\ref{thm:main_result_3} and~\ref{thm:main_result_1}.}
	\label{tb:summary-table} 
	\vskip -0.1in
\end{table*}

While \sgd~holds the rare distinction of being both theoretically well understood and practically widely used, there are still significant differences between the versions of \sgd~that are studied in theory vs those used in practice. Resolving this discrepancy is an important open question. One of the major differences is that SGD is widely used in practice \emph{with out replacement} (\sgdwor). \sgdwor uses the standard \sgd update but in each epoch/pass over data, every $i\in [n]$ is sampled \emph{exactly} once but in a uniformly random position i.e., {\em without replacement}. This implies, that $\mathbb{E}_{i_t}[\nabla f(x_t; i_t)]=\nabla F(x_t)$ {\em does not} hold anymore, making the analysis of \sgdwor~significantly more challenging. 

Studies however, have shown {\em empirically} that \sgdwor~converges significantly faster than \sgd \cite{bottou2009curiously}.  
\cite{gurbuzbalaban2015random} provided the first formal guarantee for this observation and proved that the suboptimality of \sgdwor~after $K$ epochs behaves as $\order{1/K^2}$, where as the suboptimality of \sgd~is known to be $\order{1/nK}$ (and this bound is tight). Under the same assumptions,~\cite{haochen2018random} improves upon the result of~\cite{gurbuzbalaban2015random} and shows a suboptimality bound of $O\left(1/n^2K^2 + 1/K^3\right)$ where $n$ is the number of samples and $K$ is the number of epochs. However, both the above given guarantees require Hessian Lipschitz, gradient Lipschitz (also known as smoothness) and strong convexity assumptions on $F$. In contrast, \sgd's rate of $\order{\frac{1}{nK}}$ requires only strong convexity. It is also known that this rate cannot be improved with out smoothness (gradient Lipschitz). So, in this work, we ask the following question: {\em Does \sgdwor converge at a faster rate than \sgd for general smooth, strongly-convex functions (with out Hessian Lipschitz assumption)?}

We answer the above question in affirmative and show that \sgdwor can achieve convergence rate of $\otilde{1/nK^2}$ for general smooth, strongly-convex functions. Moreover, for $K\lesssim n$, our result improves upon the best known rates~\cite{haochen2018random}. Our results also improve upon the $O(1/nK)$ rate of \sgd~once $K\geq O(\kappa^2\log{nK})$ where $\kappa$ is the condition number of the problem~\eqref{eq:cond}. In contrast,~\cite{haochen2018random} requires $K\geq \order{\kappa^{1.5}\cdot \sqrt{n}}$ to improve upon the rates of \sgd. Note that in practice one takes only a few passes over the data and hence a practical method needs to demonstrate faster rate for a small number of epochs.  Finally, our analysis yields improved dependence on problem dependent parameters like $\kappa$.

As mentioned above, in many settings, we are interested in the performance of \sgdwor, when the number of passes $K$ is quite small. \cite{shamir2016without} considers an extreme version of this setting, and obtains suboptimality bounds for \sgdwor~for the \emph{first} pass, for the special case of \emph{generalized linear models}. These bounds are similar to the standard suboptimality bounds for \sgd~of $\order{1/n}$ and $\order{1/\sqrt{n}}$ for convex functions with and with out strong convexity respectively (here number of passes $K=1$). 

For the small $K$ regime, we obtain similar convergence rates of $\order{1/nK}$ and $\order{\tfrac{1}{\sqrt{nK}}}$ for smooth convex functions with and with out strong convexity respectively. This improves upon~\cite{shamir2016without} by showing the result for \emph{general} convex functions, for any number of epochs and also in terms of dependence on problem dependent parameters. These results are summarized in Table~\ref{tb:summary-table}. The first three rows of the table compare our result for {\em large} $K$ against those of \cite{gurbuzbalaban2015random} and \cite{haochen2018random}. The next two rows compare our result for {\em small} $K$ (i.e., constant $K$) against that of \cite{shamir2016without} in the presence of strong convexity. The final two rows compare our result for {\em small} $K$ against that of \cite{shamir2016without} \emph{without} strong convexity.

As noted earlier, the main challenge in analyzing \sgdwor is that in expectation, the update does not follow gradient descent (GD). That is, $\mathbb{E}_{i_t}[\nabla f(x_t; i_t)]\neq \nabla F(x_t)$.
The main proof strategy is to bound the bias in \sgdwor update, i.e., $\|\mathbb{E}_{i_t}[\nabla f(x_t; i_t)]- \nabla F(x_t)\|$ as well as the variance associated with the update, i.e., $\mathbb{E}_{i_t}[\|\nabla f(x_t; i_t)\|^2]-\|\nabla F(x_t)\|^2$. To bound the bias term,
we use a novel coupling technique for limiting Wasserstein distance between the paths of \sgdwor and \sgd. For the variance term, we use smoothness of the function to show that compared to \sgd, \sgdwor naturally leads to variance reduction. We put together these two terms and analyze them in different settings of $K$ (constant vs condition number dependent $K$) to obtain our final results (Theorems~\ref{thm:main_result_2},~\ref{thm:main_result_3} and~\ref{thm:main_result_1}). 



\textbf{Organization}: We introduce problem setup, notations, and a brief overview of related works in Section~\ref{sec:prob}. In Section~\ref{sec:results}, we present our main results, compare it with existing work and give a rough outline of our proof strategy. In Section~\ref{sec:coupling_and_wasserstein}, we introduce coupling and Wasserstein distances and use these ideas to state and prove some important lemmas in our context. Section~\ref{sec:proofs} presents the proofs of our main results. Finally, we conclude with Section~\ref{sec:conc}. Due to space limitations, some of the proofs are presented in the appendix.
\section{Problem Setup}
\label{sec:prob}
\renewcommand{\algorithmicrequire}{\textbf{Input:}}
\renewcommand{\algorithmicensure}{\textbf{Output:}}

\begin{figure*}[t]
	\begin{minipage}{.49\textwidth}
		\begin{algorithm}[H]
			\small   
			\caption{\sgd: SGD with replacement}\label{alg:sgd}
			\begin{algorithmic}[1]
				\Require Functions $f(x;i), i \in [n]$, convex set $\mathcal{W}$, maximum number of epochs $K$, step-size sequence $\alpha_{k,i},\ k\in [K],\ i\in [n]$
				\State $x_n^0\leftarrow 0$
				\For{$k\in[K]$}
				\State $x_0^k\leftarrow x_n^{k-1}$
				\For{$0\leq i\leq i\leq n-1$}
					\State $j^k_i\leftarrow Unif[n]$
					\State $x_{i+1}^k\leftarrow \Pi_{\mathcal{W}}\left(x_i^k - \alpha_{k,i}\nabla f\left(x_i^{k};j^k_i\right)\right)$
				\EndFor 
				\EndFor 
			\end{algorithmic}
		\end{algorithm}
	\end{minipage}\quad
	\begin{minipage}{.49\textwidth}
	\begin{algorithm}[H]
		\small    
		\caption{\sgdwor: SGD without replacement}\label{alg:sgdwor}
		\begin{algorithmic}[1]
				\Require Functions $f(x;i), i \in [n]$, convex set $\mathcal{W}$, number of epochs $K$, step-size sequence $\alpha_{k,i},\ k\in [K],\ i\in [n]$
				\State $x_n^0\leftarrow 0$
				\For{$k\in[K]$}
					\State $x_0^k\leftarrow x_n^{k-1}$
					\State $\sigma_k \leftarrow $ uniformly random permutation of $[n]$
					\For{$0\leq i\leq i\leq n-1$}
						\State $x_{i+1}^k\leftarrow \Pi_{\mathcal{W}}\left(x_i^k - \alpha_{k,i}\nabla f\left(x_i^{k};\sigma_k(i+1)\right)\right)$
					\EndFor
				\EndFor
		\end{algorithmic}
	\end{algorithm}
	\end{minipage}
\end{figure*}

Given convex functions $f(;1),\dots,f(;n) : \mathbb{R}^d \to \mathbb{R}$, we consider the following optimization problem: 
\begin{equation}\label{eq:prob}\min_{x\in \mathcal{W}} F(x) := \frac{1}{n}\sum_{i=1}^n f(x;i)\,,\end{equation}
where $\mathcal{W}\subset \mathbb{R}^d$ is a closed convex set. We will refer to $F$ as the objective function and $f(\cdot;i)$ as the component functions. Henceforth, we let $x^{*}$ denote the minimizer of $F$ over $\mathcal{W}$ and $\Pi_{\mathcal{W}}$ denote the projection operator onto the set $\mathcal{W}$. We study \sgdwor when applied to the above problem. The algorithm takes $K$ passes (epochs) over the data. In each pass, it goes through the component functions in a random order $\sigma_k:[n]\rightarrow[n]$ and requires a step size sequence $\alpha_{k,i} \geq 0$ for $k\in [K],\ 0\leq i\leq n-1$ for computing stochastic gradient. See Algorithm~\ref{alg:sgdwor} for pseudo-code. 

For simplicity of analysis and exposition, we assume constant step-sizes $\alpha_{k,i}$. 
For our analysis, we assume that the component functions are twice differentiable, uniformly $G$ lipschitz and $L$ smooth over $\mathcal{W}$.
\begin{assumption}[Lipschitz Continuity]\label{as:lc}There exists $G > 0$ such that 
$\|\nabla f(x;i)\| \leq G \; \forall \; x \in \mathcal{W}$ and $i \in [n]$. 
\end{assumption}

\begin{assumption}[Smoothness/Gradient Lipschitz]\label{as:smooth} There exists $L > 0$ such that, $\|\nabla f(x;i) - \nabla f(y;i)\| \leq L\|x-y\| \; \forall \; x,y\in \mathcal{W}$ and $i \in [n]$.
\end{assumption}
In addition, we require strong-convexity of $F(\cdot)$ for Theorem~\ref{thm:main_result_2} and Theorem~\ref{thm:main_result_3} to hold. 
\begin{assumption}[Strongly-convex]\label{as:sc} There exists $\mu > 0$ s. t.
$F(y) \geq F(x) + \langle \nabla F(x), y-x\rangle + \frac{\mu}{2} \|y-x\|^2 \; \forall \; x,y\in \mathcal{W}$.
\end{assumption}
We define condition number $\kappa$ of the problem \eqref{eq:prob} as: 
\begin{equation}
\label{eq:cond}
\kappa=L/\mu,
\end{equation}
where $L$ and $\mu$ are smoothness and strong convexity parameters defined by Assumptions ~\ref{as:smooth} and \ref{as:sc}, respectively. Finally, we denote the distance of initial point $x_i^0$ from the optimum by $D$ i.e., $D \defeq \norm{x_i^0 - x^*}$.

\subsection{Related Work}
Gradient descent (GD) and it's variants are well-studied in literature \cite{bubeck2015convex}. If Assumption~\ref{as:lc} is satisfied, then suboptimality of GD (more precisely subgradient descent) with averaging is bounded by $O(G\cdot D/\sqrt{K})$ where $K$ is the number of GD iterations. With Assumption \ref{as:smooth}, the convergence rate improves to $O(LD^2/K)$ and with additional Assumption~\ref{as:sc}, it further improves to $O(e^{-K/\kappa}LD^2)$ where $\kappa$ is defined by \eqref{eq:cond}. For smooth functions, accelerated gradient descent (AGD) further improves the rates to $O(LD^2/K^2)$ and $O(e^{-K/\sqrt{\kappa}}LD^2)$, in the above two settings respectively~\cite{bubeck2015convex}. 

Each iteration of GD requires a full pass over data and hence requires prohibitively large $O(n\cdot T_f)$ computation where $T_f$ is the computation cost of evaluating gradient of any $f(x;i)$ at any $x$. In contrast, SGD (Algorithm~\ref{alg:sgd}) requires only $O(T_f)$ computation per step. Moreover, \sgd's suboptimality after $K$ passes over the data is $O(G\cdot D/\sqrt{nK})$ with Assumption~\ref{as:lc}. Similarly, it is $O(G^2/\mu \cdot 1/(nK))$ if Assumption~\ref{as:sc} also holds. Without any additional assumptions, these rates are known to be tight.

With additional Assumption~\ref{as:smooth}, people have designed acceleration methods for \sgd such as SAGA~\cite{defazio2014saga}, SVRG~\cite{johnson2013accelerating}, SDCA~\cite{shalev2013stochastic} and SAG~\cite{schmidt2017minimizing} - these methods achieve variance reduction using previous iterates in the algorithm and obtain faster rates of convergence. Note that none of these results applies for \sgdwor as sampling without replacement introduces dependencies between iterates and gradients. But, at a high-level, our result shows that \sgdwor~naturally achieves some amount of variance reduction giving better convergence rate than \sgd.

There have also been other works that study \sgdwor.~\cite{recht2012beneath} relate the performance of \sgdwor to a noncommutative version of arithmetic-geometric mean inequality~\cite{zhang2014note,israel2016arithmetic}. However, this conjecture has not yet been fully resolved.~\cite{ying2018stochastic} shows that for a small enough fixed step size, the distibution of \sgdwor converges closer to the optimum than \sgd.

\section{Main Results}
\label{sec:results}
In this section, we present our main results for~\sgdwor~and the main ideas behind the proofs. Recall that $x_i^k$ denotes the iterates of \sgdwor~ and let $x^*$ be a minimizer of $F(\cdot)$ over $\mathcal{W}$. We define $d_{i,k} := \|x_i^k - x^*\|$. We now present our first result that improves upon the convergence rate of \sgd~for large $K$.
%
%
%
\begin{theorem}
\label{thm:main_result_2} Suppose $F(\cdot)$ satisfies Assumptions~\ref{as:lc}-\ref{as:sc}. Fix $l > 0$ and let number of epochs $K > 32 l \kappa^2\log{nK}$. Let $x_{i}^{k}$ be the iterates of \sgdwor(Algorithm~\ref{alg:sgdwor}) when applied to $F(\cdot)$ with constant learning rate $\alpha_{k,i} = \alpha \defeq 4l \frac{\log{nK}}{\mu nK}$. Then the following holds for the tail average $\hat{x} \defeq \frac{1}{K-\lceil\tfrac{K}{2}\rceil + 1} \sum_{k= \lceil\tfrac{K}{2}\rceil}^{K} x_{0}^{k}$ of the iterates: 
	
	
	
\begin{align*}
\mathbb{E}[F(\hat{x})] - F(x^*) &\leq  O\left(\mu\tfrac{d_{0,1}^2}{(nK)^l}\right) + O\left(\tfrac{\kappa^2G^2}{\mu}\tfrac{(\log{nK})^2}{nK^2}\right).
\end{align*}
\end{theorem}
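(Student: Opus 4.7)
The plan is to view one full epoch of \sgdwor as a single (perturbed) gradient descent step on $F$ with step size $\alpha n$, and then reduce the theorem to a noisy-GD recursion over the $K$ macro-iterations indexed by epoch. The crucial algebraic identity that enables this reduction is that $\sigma_k$ is a permutation, so
\begin{equation*}
\sum_{i=0}^{n-1} \nabla f(x_0^k;\sigma_k(i+1)) \;=\; \sum_{j=1}^{n}\nabla f(x_0^k;j) \;=\; n\,\nabla F(x_0^k).
\end{equation*}
Hence, ignoring the projection, $x_n^k = x_0^k - \alpha n\,\nabla F(x_0^k) - \alpha e_k$, where the epoch-level drift error is
\begin{equation*}
e_k \;:=\; \sum_{i=0}^{n-1}\bigl[\nabla f(x_i^k;\sigma_k(i+1)) - \nabla f(x_0^k;\sigma_k(i+1))\bigr].
\end{equation*}
The first thing I would do is check that the step size $\alpha n = 4l\log(nK)/(\mu K)$ is at most $1/L$ under the hypothesis $K > 32l\kappa^2\log(nK)$, so that pure GD with step $\alpha n$ on $F$ contracts $\|\cdot - x^*\|^2$ by $(1-\alpha n\mu)$.

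Next I would derive a per-epoch recursion. Expanding $d_{0,k+1}^2$, using the GD contraction on the $x_0^k - \alpha n\nabla F(x_0^k)$ term, Young's inequality on the cross term $\langle\cdot,\alpha e_k\rangle$, nonexpansiveness of $\Pi_{\mathcal{W}}$, and taking expectations, I aim for
\begin{equation*}
\mathbb{E}[d_{0,k+1}^2] \;\leq\; \bigl(1 - \tfrac{\alpha n\mu}{2}\bigr)\,\mathbb{E}[d_{0,k}^2] \;+\; C\,\alpha^2\,\mathbb{E}\|e_k\|^2.
\end{equation*}
Iterating this over the last $K/2$ epochs and using $(1 - 2l\log(nK)/K)^{K/2} \leq (nK)^{-l}$ handles the transient term and matches $d_{0,1}^2/(nK)^l$ in the theorem. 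Converting $\mathbb{E}[d_{0,k}^2]$ to a bound on $\mathbb{E}[F(\hat{x})] - F(x^*)$ is done via convexity and Jensen over the tail epochs (so that the $L$ versus $\mu$ prefactor matches the stated $\mu d_{0,1}^2/(nK)^l$ and $\kappa^2 G^2/\mu$ form).

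The heart of the argument, and the step I expect to be the main obstacle, is controlling $\mathbb{E}\|e_k\|^2$. The deterministic smoothness bound $\|x_i^k - x_0^k\| \leq i\alpha G$ yields only $\|e_k\| \leq L\alpha G n^2$, which gives steady-state noise scaling like $(\log nK)^3/K^3$ — missing the claimed $1/n$ factor. To recover $\mathbb{E}\|e_k\|^2 \lesssim L^2\alpha^2 G^2 n^3$ one must exploit variance reduction inherent in sampling without replacement, and this is precisely where I would invoke the Wasserstein-based coupling machinery of Section~\ref{sec:coupling_and_wasserstein}. Concretely, I would couple the sequence $(x_i^k)_{i=0}^{n-1}$ inside epoch $k$ with a reference process that samples with replacement from $[n]$, bound the Wasserstein distance between the two after $i$ steps (which grows slowly because the without-replacement conditional distribution is close to uniform until late in the epoch), and use this to replace the conditional mean of $\nabla f(x_i^k;\sigma_k(i+1))$ by $\nabla F(x_i^k)$ plus a small bias term. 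The residual fluctuations then behave like the sum of a sampling-without-replacement martingale whose variance exhibits the desired $n^3$ (instead of $n^4$) scaling after squaring.

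Finally, plugging $\alpha = 4l\log(nK)/(\mu nK)$ and the improved bound $\mathbb{E}\|e_k\|^2 \lesssim L^2\alpha^2 G^2 n^3$ into the stationary level of the recursion gives a residual noise of order $\kappa^2 G^2(\log nK)^2/(\mu^2 nK^2)$ at the distance-squared level, which converts (via the tail-averaging and smoothness/convexity step) to the claimed $O\bigl(\kappa^2 G^2(\log nK)^2/(\mu nK^2)\bigr)$ suboptimality, completing the proof.
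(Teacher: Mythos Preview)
Your high-level plan---view one epoch of \sgdwor\ as a perturbed GD step on $F$ with effective step size $\alpha n$, derive a per-epoch contraction recursion on $\mathbb{E}[d_{0,k}^2]$, unroll over $K/2$ epochs, then tail-average---is exactly the paper's strategy. Where you diverge from the paper is in the treatment of the noise, and there you are making the problem harder than it is.

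You frame the obstacle as bounding $\mathbb{E}\|e_k\|^2$ with $e_k=\sum_i[\nabla f(x_i^k;\sigma_k(i{+}1))-\nabla f(x_0^k;\sigma_k(i{+}1))]$, and you propose a martingale/variance-reduction argument based on coupling the epoch with a with-replacement process. The paper does neither of these things. Instead, it keeps the full squared term $T_2=\alpha^2\bigl\|\sum_i\nabla f(x_i^k;\sigma_k(i{+}1))\bigr\|^2$ and centers it at $x^*$ rather than at $x_0^k$: since $\sigma_k$ is a permutation and $\nabla F(x^*)=0$, one has $\sum_i\nabla f(x^*;\sigma_k(i{+}1))=0$, so smoothness gives $T_2\le \alpha^2L^2\bigl(\sum_i d_{i,k}\bigr)^2$. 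Combining Cauchy--Schwarz with the second part of Lemma~\ref{lem:temporal_regularity}, $\mathbb{E}[d_{i,k}^2]\le\mathbb{E}[d_{0,k}^2]+5i\alpha^2G^2$, yields $\mathbb{E}[T_2]\le \alpha^2L^2n^2\mathbb{E}[d_{0,k}^2]+5\alpha^4L^2G^2n^3$. The first piece is absorbed into the contraction precisely because $K>32l\kappa^2\log(nK)$ forces $\alpha^2n^2L^2\le \tfrac14\alpha n\mu$; the second piece already has the $n^3$ scaling you were after, with no stochastic cancellation needed. The cross term $T_1$ is handled similarly via the first part of Lemma~\ref{lem:temporal_regularity}, which bounds $\mathbb{E}\|x_i^k-x_0^k\|^2$ by $O(i\alpha^2G^2)+2i\alpha\,\mathbb{E}[F(x_0^k)-F(x^*)]$; the suboptimality term is absorbed into the $-2n\alpha(F(x_0^k)-F(x^*))$ coming from strong convexity.

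So the Wasserstein machinery of Section~\ref{sec:coupling_and_wasserstein} is used, but only indirectly: it enters through the proof of Lemma~\ref{lem:temporal_regularity} (via Lemma~\ref{lem:wasserstein_stability_bound}, which bounds the bias $|\mathbb{E}[F(x_j^k)]-\mathbb{E}[f(x_j^k;\sigma_k(j{+}1))]|\le 2\alpha G^2$). Note also that the coupling there is between the \emph{unconditioned} and \emph{conditioned-on-$\sigma_k(i{+}1)=r$} without-replacement iterates---not between with- and without-replacement as you describe. Your proposed martingale route might be made to work, but it is both more intricate and unnecessary for the stated bound.
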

\textbf{Remarks}:
\begin{itemize}
	\item The error has two terms -- first term depending on initial error $d_{0,1}$ and second term depending on problem parameters $L, G$ and $\mu$. The dependence on initial error can be made to decay very fast by choosing $l$ to be a large enough constant, i.e., $K=\Omega(\kappa^2)$. In this case, the leading order term is the second term which decays as $\order{\frac{1}{nK^2}}$. Our result  improves upon the $\order{\frac{G^2}{\mu nK}}$ rate of \sgd~once $K > \order{\kappa^2\log{nK}}$.
	\item Our result improves upon the state of the art result for \sgdwor~by~\cite{haochen2018random} as long as $K \leq \kappa n$, which captures the most interesting setting in practice. Furthermore, we do not require the additional Hessian Lipschitz assumption. For the sake of clarity, \cite{haochen2018random} keeps all parameters other than $\mu$ constant and takes $\kappa = \Theta(1/\mu)$ to get suboptimality of $\tilde{O}\left(\frac{\kappa^4}{n^2K^2} + \frac{\kappa^4}{K^3} + \frac{\kappa^6}{K^4}\right)$. By the same token, our suboptimality is $\tilde{O}(\frac{\kappa^3}{nK^2})$.

\end{itemize}

Note that Theorem~\ref{thm:main_result_2} requires the number of passes $K > \kappa^2$. We now present results that apply even for small number of passes. In this setting, we match the rates of \sgd. The problem setting is the same as Theorem~\ref{thm:main_result_2}.

\begin{theorem}
\label{thm:main_result_3}
Suppose $F(\cdot)$ satisfies Assumptions~\ref{as:lc}-\ref{as:sc}. Let $x_{i}^{k}$ be the iterates of \sgdwor(Algorithm~\ref{alg:sgdwor}) when applied to $F(\cdot)$ with constant learning rate $\alpha_{k,i} = \alpha \defeq \min\left(\tfrac{2}{L},4l \tfrac{\log{nK}}{\mu nK}\right)$ for a fixed $l>0$. Then the following holds for the tail average $\hat{x} \defeq \frac{1}{K-\lceil\tfrac{K}{2}\rceil + 1} \sum_{k= \lceil\tfrac{K}{2}\rceil}^{K} x_{0}^{k}$ of the iterates:
\begin{align*}
 \mathbb{E}[F(\hat{x})]-F(x^{*}) 
&= O\left(\mu\tfrac{\|x_0^1-x^*\|^2}{(nK)^l} + L\tfrac{\|x_0^1-x^*\|^2}{(nK)^{(l+1)}}\right) + O\left(\tfrac{G^2\log{nK}}{\mu nK} + \tfrac{L^2G^2\log{nK}}{\mu^3 n^2K^2}\right).
\end{align*}
\end{theorem}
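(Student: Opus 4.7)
The plan is to follow the bias-plus-variance scheme outlined in the introduction, specialized to the step size $\alpha = \min(2/L,\,4l\log(nK)/(\mu nK))$ that interpolates between a ``small $K$'' regime (where the $2/L$ cap binds) and a ``large $K$'' regime (where the $\log(nK)/(\mu nK)$ piece binds). The two initial-error terms and the two noise terms in the stated bound correspond, respectively, to these two regimes and to the $O(\alpha G^2)$ versus $O(\alpha^3 L^2 G^2/\mu)$ pieces of the per-step noise.

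First I would expand one within-epoch iteration. Non-expansiveness of $\Pi_{\mathcal{W}}$ gives
\[
d_{i+1,k}^2 \leq d_{i,k}^2 - 2\alpha\,\langle x_i^k - x^*,\,\nabla f(x_i^k;\sigma_k(i+1))\rangle + \alpha^2\,\|\nabla f(x_i^k;\sigma_k(i+1))\|^2.
\]
I would then split the cross term by adding and subtracting $\nabla F(x_i^k)$: the ``true gradient'' piece yields the usual strong-convexity contraction $\langle\nabla F(x),x-x^*\rangle \geq F(x)-F(x^*)+\tfrac{\mu}{2}\|x-x^*\|^2$, while the ``bias'' piece $\mathbb{E}\langle x_i^k-x^*,\,\nabla f(x_i^k;\sigma_k(i+1))-\nabla F(x_i^k)\rangle$ is the central difficulty of sampling without replacement. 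This bias I would control using the coupling/Wasserstein lemmas in Section~\ref{sec:coupling_and_wasserstein}: couple the \sgdwor iterate with a surrogate process that sees an unbiased gradient at step $i+1$, observe that the two processes have differed only through $i$ prior steps (each of size at most $\alpha G$), and invoke smoothness to convert this displacement into an $O(\alpha L)$ discrepancy on the gradient. This upgrades a naive $O(\alpha G)$ bias into roughly $O(\alpha^2 L G\cdot i/n)$ per step.

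For the variance I would use smoothness through $\|\nabla f(x;i)\|^2 \leq 2L^2\|x-x^*\|^2 + 2\|\nabla f(x^*;i)\|^2$; the first summand is absorbed into the contractive factor whenever $\alpha\leq 2/L$ (which holds by the step-size cap), and the second averages to a $G^2$ noise floor. The resulting one-step recursion takes the shape $\mathbb{E}[d_{i+1,k}^2] \leq (1-c\alpha\mu)\,\mathbb{E}[d_{i,k}^2] + \alpha^2 O(G^2) + \alpha^3 O(L^2 G^2/\mu)$. Unrolling across $nK$ inner steps, the factor $(1-c\alpha\mu)^{nK}$ forces the initial error to decay as $(nK)^{-\Omega(l)}$ in the small-step regime, producing $\mu\|x_0^1-x^*\|^2/(nK)^l$ after converting distance to function value via strong convexity, and as $(nK)^{-(l+1)}$ with an extra $L/\mu$ factor in the $\alpha = 2/L$ regime, producing $L\|x_0^1-x^*\|^2/(nK)^{l+1}$ via smoothness. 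The steady-state noise levels from the $\alpha^2 G^2$ and $\alpha^3 L^2 G^2/\mu$ contributions give $O(G^2\log(nK)/(\mu nK))$ and $O(L^2 G^2\log(nK)/(\mu^3 n^2 K^2))$ respectively. Tail averaging combined with Jensen's inequality applied to $F$ then transfers the bound from $\mathbb{E}\|x_0^k-x^*\|^2$ to $\mathbb{E} F(\hat x)-F(x^*)$.

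The main obstacle is the bias estimate: a triangle-inequality bound using only Lipschitz continuity yields $O(\alpha G)$ per step and merely reproduces the vanilla \sgd rate. Extracting the extra $\alpha L$ factor via the exchangeable-pairs/Wasserstein coupling of Section~\ref{sec:coupling_and_wasserstein}, and then propagating the improvement cleanly through both step-size regimes while tracking the $\log(nK)$ factors arising from $(1-c\alpha\mu)^{nK}$ with $\alpha = \Theta(\log(nK)/(\mu nK))$, is the delicate technical bookkeeping that distinguishes this analysis from prior $O(1/(nK))$-type bounds that needed either $K=1$ or a generalized-linear-model structure.
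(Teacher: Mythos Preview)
Your overall skeleton---expand one inner step, split off the bias $\langle x_i^k - x^*,\,\nabla f(x_i^k;\sigma_k(i+1)) - \nabla F(x_i^k)\rangle$, control it via the coupling/Wasserstein machinery, arrive at a recursion of the shape $\mathbb{E}[d_{i+1,k}^2] \leq (1-c\alpha\mu)\mathbb{E}[d_{i,k}^2] + O(\alpha^2 G^2) + O(\alpha^3 L^2 G^2/\mu)$, and unroll---matches the paper. Three points, however, need correction.

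\textbf{Variance.} You bound $\|\nabla f(x;i)\|^2 \leq 2L^2\|x-x^*\|^2 + 2\|\nabla f(x^*;i)\|^2$ and claim ``the first summand is absorbed into the contractive factor whenever $\alpha\leq 2/L$.'' This fails: the term to absorb is $2\alpha^2 L^2 d_{i,k}^2$, while the contraction supplies only $\alpha\mu\, d_{i,k}^2$, so absorption needs $\alpha \lesssim \mu/L^2$, not $\alpha\leq 2/L$. In the small-$K$ regime where the $2/L$ cap binds, $2\alpha^2 L^2 = 8$ swamps a contraction of order $\mu/L\leq 1$ and the recursion blows up. The paper sidesteps this by using the Lipschitz bound $\|\nabla f\|^2 \leq G^2$ directly for the squared-gradient term; smoothness is used only inside the bias estimate.

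\textbf{Bias coupling.} Your description (``the two processes have differed only through $i$ prior steps, each of size at most $\alpha G$,'' giving a bias ``$O(\alpha^2 L G\cdot i/n)$'') suggests a displacement growing with $i$. The paper's coupling is sharper and $i$-independent: one conditions on $\sigma_k(i+1)=r$ and couples $\sigma_k$ to the permutation obtained by a \emph{single swap} placing $r$ at position $i+1$ (the map $\Lambda_{r,i}$). By Lemma~\ref{lem:stability} the two trajectories then differ by at most $2\alpha G$ uniformly in $i$, and this $O(\alpha G)$ Wasserstein bound, after Cauchy--Schwarz and an AM--GM split with weight $\mu/L$, is exactly what produces the $\alpha^2 G^2$ and $\alpha^3 L^2 G^2/\mu$ noise contributions.

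\textbf{Final conversion.} You cannot ``convert distance to function value via strong convexity'': strong convexity gives $F(x)-F(x^*) \geq \tfrac{\mu}{2}\|x-x^*\|^2$, which points the wrong way. The paper instead keeps the $-2\alpha\bigl(F(x_i^k)-F(x^*)\bigr)$ term in the one-step inequality, telescopes over the second half of the run, and divides through by $n(K/2)\alpha$. The two initial-error terms $\mu\|x_0^1-x^*\|^2/(nK)^l$ and $L\|x_0^1-x^*\|^2/(nK)^{l+1}$ then arise from the two branches of $1/\alpha \leq L/2 + \mu nK/(4l\log nK)$, not from separate strong-convexity and smoothness conversions.
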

\textbf{Remarks}:
\begin{itemize}
	\item The dependence on initial error can be made to decay as fast as any polynomial by choosing $l$ to be a large enough constant. 
	\item Our result is the first such result for general smooth, strongly-convex functions and for arbitrary $K$; recall that the result of \cite{shamir2016without} requires $F$ to be a generalized linear function and requires $K=1$. Furthermore, even in setting of  \cite{shamir2016without}, our result improves upon best known bounds when $nK > \kappa^2$. In this case, our error rate is $\order{\frac{G^2 \log nK}{\mu n K}}$ that matches the rate of \sgd upto log factors. The result of~\cite{shamir2016without} does not obtain this rate even when $n \rightarrow \infty$.
\end{itemize}
The above two theorems require $F(\cdot)$ to be strongly convex (Assumption~\ref{as:sc}). We now present our result for $F$ that need not satisfy the strong convexity assumption.
\begin{theorem}
	\label{thm:main_result_1}
	Suppose $F(\cdot)$ satisfies Assumptions~\ref{as:lc}-\ref{as:smooth} and that $\mathsf{diam}(\mathcal{W}) \leq D$ . The average $\hat{x} \defeq \frac{\sum_{i=0}^{n-1}\sum_{k=1}^{K}x_{i}^{k}}{Kn}$ of \sgdwor~(Algorithm~\ref{alg:sgdwor}) with constant learning rate $\alpha_{k,i} = \alpha \defeq \min\left(\tfrac{2}{L},\tfrac{D}{G\sqrt{Kn}}\right)$ satisfies: 
	$$\mathbb{E}[F(\hat{x})] - F(x^{*}) \leq \tfrac{D^2L}{4nK} + \tfrac{3GD}{\sqrt{nK}}.$$ 
\end{theorem}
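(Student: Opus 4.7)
The plan is to mimic the standard projected-subgradient analysis for smooth convex \sgd, but carefully account for the bias introduced by without-replacement sampling. The core object will be the squared-distance potential $\Phi_t \defeq \|x_t - x^*\|^2$, indexed by the global step $t = (k-1)n + i$, with the shorthand $g_i^k \defeq \nabla f(x_i^k;\sigma_k(i+1))$.

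Nonexpansiveness of $\Pi_{\mathcal{W}}$ together with Assumption~\ref{as:lc} yields the per-step inequality
\begin{equation*}
\|x_{i+1}^k - x^*\|^2 \le \|x_i^k - x^*\|^2 - 2\alpha\langle g_i^k, x_i^k - x^*\rangle + \alpha^2 G^2,
\end{equation*}
which telescopes over all $nK$ steps into $\sum_{k,i}\langle g_i^k, x_i^k - x^*\rangle \le \tfrac{D^2}{2\alpha} + \tfrac{\alpha nK G^2}{2}$. Splitting $g_i^k = \nabla F(x_i^k) + (g_i^k - \nabla F(x_i^k))$ and using convexity of $F$ via $\langle \nabla F(x_i^k), x_i^k - x^*\rangle \ge F(x_i^k) - F(x^*)$ reduces the task to bounding the bias sum $B \defeq \sum_{k,i}\langle \nabla F(x_i^k) - g_i^k,\, x_i^k - x^*\rangle$ in expectation.

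The crucial structural identity is $\sum_{i=0}^{n-1}\nabla f(x_0^k;\sigma_k(i+1)) = n\nabla F(x_0^k)$ for every permutation $\sigma_k$. Writing $x_i^k - x^* = (x_0^k - x^*) + (x_i^k - x_0^k)$ inside $B$ and pulling out the frozen-point gradient average, the only surviving contributions involve the gradient drift $\nabla f(x_i^k;\cdot) - \nabla f(x_0^k;\cdot)$ (controlled by $L\|x_i^k - x_0^k\|$ via Assumption~\ref{as:smooth}) and the per-iterate drift $\|x_i^k - x_0^k\| \le i\alpha G$ that follows from Assumption~\ref{as:lc}. The constraint $\alpha \le 2/L$ is what keeps these $L\alpha$- and $\alpha G$-scaled residuals as higher-order corrections rather than leading terms. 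Dividing by $nK$ and applying Jensen to $\hat{x} = \tfrac{1}{nK}\sum_{k,i} x_i^k$ yields
\begin{equation*}
\mathbb{E}[F(\hat{x})] - F(x^*) \;\le\; \frac{D^2}{2\alpha\, nK} + \frac{\alpha G^2}{2} + \frac{\mathbb{E}[B]}{nK},
\end{equation*}
and plugging $\alpha = \min\bigl(\tfrac{2}{L},\, \tfrac{D}{G\sqrt{nK}}\bigr)$ produces the $\tfrac{D^2 L}{4nK}$ and $\tfrac{3GD}{\sqrt{nK}}$ terms in the two step-size regimes.

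The main obstacle is the bias step. A naive triangle inequality together with $\|x_i^k - x_0^k\| \le i\alpha G$ summed across an epoch gives a per-epoch bias of order $\alpha n^2 G(DL + G)$, which after summing across $K$ epochs and dividing by $nK$ is a factor of $n$ too large and would inflate the final bound to $O(GD\sqrt{n/K})$ rather than the required $O(GD/\sqrt{nK})$. Recovering the correct dependence will require the coupling and Wasserstein-distance machinery of Section~\ref{sec:coupling_and_wasserstein}: by coupling the \sgdwor~path to a companion \sgd~path whose one-step bias is zero, the cross terms in $B$ can be replaced by expected distances between the coupled processes, which the smoothness-driven variance reduction of \sgdwor~trims down to lower order.
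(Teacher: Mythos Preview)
Your overall framework—per-step squared-distance potential, telescoping, then coupling/Wasserstein for the bias—is exactly what the paper does. But the specific decomposition you commit to is not the one the paper uses, and it does not deliver the stated constants.

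The paper does \emph{not} split $g_i^k = \nabla F(x_i^k) + (g_i^k - \nabla F(x_i^k))$ and then invoke convexity of $F$. Instead it applies convexity of the \emph{individual} component $f(\cdot;r)$ directly:
\[
\langle g_i^k,\, x_i^k - x^*\rangle \;\ge\; f\bigl(x_i^k;\sigma_k(i+1)\bigr) - f\bigl(x^*;\sigma_k(i+1)\bigr).
\]
After taking expectations, $\mathbb{E}[f(x^*;\sigma_k(i+1))]=F(x^*)$ exactly (since $x^*$ is deterministic), so the only bias sits at the \emph{function-value} level: $\mathbb{E}[f(x_i^k;\sigma_k(i+1))]-\mathbb{E}[F(x_i^k)]$. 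Lemma~\ref{lem:wasserstein_stability_bound} bounds this by $2\alpha G^2$—a pure number, with no dependence on $L$, $D$, or $\|x_i^k-x^*\|$. The per-step error is then exactly $5\alpha^2 G^2$, and telescoping plus the choice of $\alpha$ finish the proof in two lines.

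Your route leaves the bias at the \emph{gradient} level. The map $x\mapsto \langle \nabla f(x;r),\,x-x^*\rangle$ is only $(LD+G)$-Lipschitz on $\mathcal{W}$, so the same Wasserstein argument (cf.\ the bound on $R_{i,k}$ in the proof of Theorem~\ref{thm:main_result_3}) gives $\mathbb{E}[R_{i,k}]\le 2\alpha G(LD+G)$ rather than $2\alpha G^2$. After telescoping and dividing by $2\alpha nK$, the extra $2\alpha GLD$ survives as a term of order $LD^2/\sqrt{nK}$ when $\alpha = D/(G\sqrt{nK})$, which is not in the stated bound. The anchoring identity $\sum_i \nabla f(x_0^k;\sigma_k(i+1))=n\nabla F(x_0^k)$ is not needed here at all; that device belongs to the proof of Theorem~\ref{thm:main_result_2}, where an entire epoch is compressed into one approximate GD step. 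For Theorem~\ref{thm:main_result_1} the per-step bias is already small enough once you work at the function-value level.
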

\textbf{Remarks}:
\begin{itemize}
	\item The second term of $\order{\frac{GD}{\sqrt{nK}}}$ is the same as the rate of \sgd in this setting. This becomes the leading order term once $nK \geq \frac{L^2 D^2}{G^2}$.
	\item Our result is the first such result for general smooth, Lipschitz convex functions. The earlier result by~\cite{shamir2016without} applied only for generalized linear models but does not require smoothness assumption.
\end{itemize}

\subsection{Necessity of Smoothness}
In the classical analysis of \sgd for $\order{\frac{1}{nK}}$ rate, one only requires Assumptions~\ref{as:lc} and~\ref{as:sc}. In this section, we outline an argument showing that obtaining a better rate than $\order{\frac{1}{nK}}$ for \sgdwor as in Theorem~\ref{thm:main_result_2}, requires additional Assumption~\ref{as:smooth} (smoothness). In contrast, it is well known that the rate of $\order{\frac{1}{nK}}$ is tight for \sgd even with additional Assumption~\ref{as:smooth}.
Consider the example where all the component functions are same. i.e, $f(x;i) = g(x)$ for all $1\leq i \leq n$ and $x\in \mathbb{R}^d$. Then, running \sgdwor~ for optimizing $F(x) := \frac{1}{n}\sum_{i=1}^{n}f(x;i) = g(x)$  for $K$ epochs (over a closed convex set $\mathcal{W}$) is the same as running gradient descent over $F(x)$ for $nK$ iterations.


Given any $T = nK$, \cite[Theorem~3.13]{bubeck2015convex} shows the existence of a function satisfying Assumptions~\ref{as:lc} and~\ref{as:sc} and a closed convex set $\mathcal{W}$ such that the suboptimality of all iterates up to the $T^{\textrm{th}}$ iteration of GD--hence, for all the iterates up to $K^{\textrm{th}}$ epoch of \sgdwor--is lower bounded by $\frac{G^2}{8\mu nK}$. This establishes the necessity of Assumption~\ref{as:smooth} for obtaining improved rates over \sgd as in Theorem~\ref{thm:main_result_2}.
\subsection{Proof Strategy}
As a general note, in the proofs, we assume that $\mathcal{W}=\R^d$, which avoids the projection operator $\Pi_{\mathcal{W}}$. All the steps go through in a straight forward fashion even with this projection operator.
When we try to apply the classical proof of rate of convergence of \sgd to \sgdwor, the major problem we encounter is that $\mathbb{E} [f(x_i^{k};\sigma_k(i+1))] \neq \mathbb{E}[F(x_{i}^{k})]$. In section~\ref{sec:coupling_and_wasserstein}, we propose a coupling sequence and use it to bound a certain Wasserstein distance to argue that $\mathbb{E}[ f(x_i^{k};\sigma_k(i+1)) ]\approx \mathbb{E}[F(x_{i}^{k})]$. This along with standard analysis tools then yields Theorems~\ref{thm:main_result_3} and~\ref{thm:main_result_1}.

However, this technique does not suffice to obtain faster rate as in Theorem~\ref{thm:main_result_2}. So, to prove Theorem~\ref{thm:main_result_2}, we show that in expectation, \sgdwor~over one epoch approximates one step of GD applied to  $F$. Therefore, $K$ epochs of \sgdwor~ approximates GD iterates after $K$ iterations. Recall  
$$x_{0}^{k+1} = x_{0}^{k} - \alpha_k \sum_{i=0}^{n-1}\nabla f(x_i^{k},\sigma_k(i+1)).$$

If $x_i^k \approx x_0^k$, then the equation above implies: $$x_{0}^{k+1} \approx x_{0}^{k} - \alpha_k \sum_{i=0}^{n-1}\nabla f(x_0^{k},\sigma_k(i+1)) = x_0^k - n\alpha_k \nabla F(x_0^k)\,.$$ We observe that the right hand side is one step of gradient descent. Lemma~\ref{lem:temporal_regularity} in Section~\ref{sec:coupling_and_wasserstein} makes this argument rigorous as it shows that $\mathbb{E}[\|x_i^k - x_0^k\|^2]$ becomes small as $F(x_0^k) \rightarrow F(x^*)$.

\section{Coupling and Wasserstein distance}
\label{sec:coupling_and_wasserstein}
In this section, we develop the required machinery to show: 
\begin{equation}
\mathbb{E}[f(x_i^{k};\sigma_k(i+1))] \approx \mathbb{E}[F(x_{i}^{k})].
\label{eq:approx_independent}
\end{equation}
Define the following exchangeable pair: suppose we run the algorithm for $k-1$ epochs using permutations $\sigma_1,\dots,\sigma_{k-1}$ to obtain $x_0^{k}$. When $k=1$, this means that we start with the same starting point $x_0^1$. We draw two independent uniform permutations: $\sigma_k$ and $\sigma_k^{\prime}$. If we run the $k$-th epoch with permutation $\sigma_k$, we denote the $k$-th epoch iterates by $\left(x_i(\sigma_k)\right)_{i=1}^{n}$ to explicity show the dependence on $\sigma_k$. Similarly, the sequence obtained by using the permutation $\sigma^{\prime}_k$ for the $k$-th epoch is denoted by $\left(x_i(\sigma_k^{\prime})\right)_{i=1}^{n})$. It is clear that $(x_i(\sigma_k^{\prime}))_{i=1}^{n}$ is independent and indentically distributed as $(x_i(\sigma_k))_{i=1}^{n}$. We note:
{\small \begin{equation}\label{eq:eqv}\mathbb{E}[f(x_i(\sigma^{\prime}_k);\sigma_k(i+1))] = \mathbb{E}[f(x_i(\sigma_k^{\prime}))] = \mathbb{E}[F(x_i^k)]\,.\end{equation}}
Here the first equality follows from the fact that $\sigma_k$ is independent of $\sigma_k^{\prime}$ (and applying Fubini's theorem). The second equality follows from the fact that $x_i(\sigma_k^{\prime})$ and $x_i(\sigma_k) $ are identically distributed.  Therefore, to show~\eqref{eq:approx_independent}, we need to show that:
$\mathbb{E}[f(x_i(\sigma^{\prime}_k);\sigma_k(i+1))] - \mathbb{E}[f(x_i(\sigma_k);\sigma_k(i+1))] \approx 0\,.$
Since $f(\cdot;j)$ is uniformly lipschitz, a bound on the Wasserstein distance between $x_i(\sigma_k)$ and $x_i(\sigma_k^{\prime})$ would imply the above result. That is, Lemma~\ref{lem:wasserstein_connection} shows that $ \mathbb{E}[f(x_i(\sigma^{\prime}_k);\sigma_k(i+1))] - \mathbb{E}[f(x_i(\sigma_k);\sigma_k(i+1))]$ is bounded by the Wasserstein distance between $x_i(\sigma_k)$ and $x_i(\sigma_k^{\prime})$, and Lemma~\ref{lem:wasserstein_stability_bound} then bounds the Wasserstein distance, to bound the above quantity.

 We first introduce some notation to prove the result. Let $\mathcal{D}_{i,k} := \mathcal{L}(x_i(\sigma_k))$ and $\mathcal{D}_{i,k}^{(r)} := \mathcal{L}\left(x_i(\sigma_k)|\sigma_k(i+1) = r\right)$. Here $\mathcal{L}(X)$ denotes the distribution of the random variable $X$. We let $\mathsf{Lip}_d(\beta)$ be the set of all $\beta$ lipschitz functions from $\mathbb{R}^d \to \mathbb{R}$. 
\begin{definition}
\label{def:wasserstein_distance}
Let $P$ and $Q$ be two probability measures over $\mathbb{R}^d$ such that $\mathbb{E}_{X \sim P}[\|X\|^2] < \infty$ and $\mathbb{E}_{Y \sim Q}[\|Y\|^2] < \infty$.  Let $X \sim P$ and $Y \sim Q$ be random vectors defined on a common measure space (i.e, they are coupled). We define Wasserstein-1 and Wasserstein-2 distances between $P $ and $Q$ as:
\begin{align*}
	\dw (P,Q) &\defeq \inf_{\substack{(X,Y) : \\ X\sim P \\ Y \sim Q}}\mathbb{E}[\|X-Y\|] \; \mbox{, and}\\
	\dwtwo (P,Q) &\defeq \inf_{\substack{(X,Y) : \\ X\sim P \\ Y \sim Q}}\sqrt{\mathbb{E}[\|X-Y\|^2] },
\end{align*}
respectively. Here the infimum is over all joint distributions over $(X,Y)$ with prescribed marginals.
\end{definition}
By Jensen's inequality, we have $\dwtwo(P,Q) \geq \dw(P,Q)$.
The following result gives a fundamental characterization of Wasserstein distance~\cite{santambrogio2015optimal}.
\begin{theorem}[Kantorovich Duality]
\label{thm:kantorovich_duality}
 Let $P$ and $Q$ satisfy the conditions in Definition~\ref{def:wasserstein_distance}. Let $X \sim P$ and $Y \sim Q$ then:
$$\dw(P,Q) = \sup_{g \in \mathsf{Lip}_d(1) } \mathbb{E}[g(X)] - \mathbb{E}[g(Y)].$$
\end{theorem}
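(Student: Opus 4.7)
The plan is to prove the equality by showing both inequalities separately: the direction $\sup_g \{\mathbb{E}[g(X)] - \mathbb{E}[g(Y)]\} \leq \dw(P,Q)$ follows directly from the Lipschitz property, while the reverse is the substance of Kantorovich--Rubinstein duality and requires LP duality applied to the optimal transport problem. The finite second moment assumption ensures that all the integrals appearing below are well-defined.

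For the easy direction, fix any $g \in \mathsf{Lip}_d(1)$ and any coupling $(X',Y')$ with $X' \sim P$, $Y' \sim Q$. The 1-Lipschitz property of $g$ gives
\begin{equation*}
\mathbb{E}[g(X')] - \mathbb{E}[g(Y')] = \mathbb{E}[g(X') - g(Y')] \leq \mathbb{E}[\|X' - Y'\|].
\end{equation*}
Taking the infimum over all couplings on the right and then the supremum over $g$ on the left yields $\sup_g \{\mathbb{E}[g(X)] - \mathbb{E}[g(Y)]\} \leq \dw(P,Q)$.

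For the reverse inequality, the idea is to rewrite the Wasserstein-1 distance as the primal LP
\begin{equation*}
\dw(P,Q) = \inf_{\pi \in \Pi(P,Q)} \int \|x-y\| \, d\pi(x,y),
\end{equation*}
where $\Pi(P,Q)$ is the set of probability measures on $\mathbb{R}^d \times \mathbb{R}^d$ with marginals $P$ and $Q$, and pair it with its Kantorovich dual
\begin{equation*}
\sup_{\phi(x)+\psi(y)\leq\|x-y\|} \int \phi \, dP + \int \psi \, dQ,
\end{equation*}
taken over integrable pairs $(\phi,\psi)$ satisfying the pointwise constraint. The main obstacle is establishing \emph{strong duality} (absence of a duality gap). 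The standard three-step route is: (i) weak duality is immediate by integrating the constraint against any feasible $\pi$; (ii) for $P,Q$ with compact support, strong duality follows from a Hahn--Banach separation (equivalently, a minimax) argument in the space of bounded continuous functions; (iii) for general $P,Q$ one truncates to compact subsets and passes to the limit using the tightness and uniform integrability supplied by the finite second moment hypothesis. This step can be invoked as a black box from standard references on optimal transport.

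The final step is to reduce optimal dual pairs to the form $(g,-g)$ with $g \in \mathsf{Lip}_d(1)$, via the $c$-transform. Given any feasible $(\phi,\psi)$, replacing $\phi$ by $\psi^c(x) := \inf_y [\|x-y\| - \psi(y)]$ can only increase the dual objective, and because the cost $\|x-y\|$ is a metric, $\psi^c$ is automatically 1-Lipschitz. Applying the transform once more and invoking the triangle inequality shows that the optimum can be taken on pairs of the form $(g,-g)$ with $g \in \mathsf{Lip}_d(1)$, for which the dual objective is exactly $\mathbb{E}[g(X)] - \mathbb{E}[g(Y)]$. Combined with strong duality, this yields $\dw(P,Q) \leq \sup_{g \in \mathsf{Lip}_d(1)} \{\mathbb{E}[g(X)] - \mathbb{E}[g(Y)]\}$, closing the proof.
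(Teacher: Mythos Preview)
The paper does not actually prove this theorem; it simply states it and cites \cite{santambrogio2015optimal} as a reference, treating Kantorovich duality as a known black-box result. Your outline is the standard Kantorovich--Rubinstein argument and is correct as a sketch: the easy direction is complete, and for the hard direction you correctly identify LP (strong) duality for the transport problem plus the $c$-transform reduction to pairs $(g,-g)$ with $g$ $1$-Lipschitz as the key ingredients. Since the paper offers no proof at all, you are supplying strictly more than it does; just be aware that step (ii)--(iii) of strong duality is where all the real analytic work lies, and you are (appropriately, given the context) invoking it rather than carrying it out.
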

We can use Theorem~\ref{thm:kantorovich_duality} to bound the approximation error in~\eqref{eq:approx_independent} in terms of average Wasserstein-1 distance between $\mathcal{D}_{i,k}$ and $\mathcal{D}_{i,k}^{(r)}$.
\begin{lemma}
\label{lem:wasserstein_connection}
$$\bigr|\mathbb{E}[F(x_i^k)] - \mathbb{E}[f(x_i^k;\sigma_k(i+1))]\bigr| \leq \tfrac{G}{n}\sum_{r=1}^{n} \dw\left(\mathcal{D}_{i,k},\mathcal{D}_{i,k}^{(r)}\right)$$
\end{lemma}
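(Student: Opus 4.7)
The plan is to start from the exchangeable-pair identity~\eqref{eq:eqv}, which equates $\mathbb{E}[F(x_i^k)]$ with $\mathbb{E}[f(x_i(\sigma_k');\sigma_k(i+1))]$. This rewrites the quantity of interest as
$$\bigl|\mathbb{E}[f(x_i(\sigma_k');\sigma_k(i+1))] - \mathbb{E}[f(x_i(\sigma_k);\sigma_k(i+1))]\bigr|,$$
i.e., a difference of two expectations in which the second argument $\sigma_k(i+1)$ is the \emph{same} random index on both sides; only the permutation driving the iterate differs ($\sigma_k'$ versus $\sigma_k$). The virtue of this formulation is that it isolates exactly the lack of independence between the iterate and the sampled component.

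Next I would condition on the value of $\sigma_k(i+1)$. Since $\sigma_k$ is a uniformly random permutation, $\sigma_k(i+1) = r$ with probability $1/n$ for each $r \in [n]$. On the $\sigma_k'$ side, independence of $\sigma_k'$ from $\sigma_k$ means the conditional law of $x_i(\sigma_k')$ given $\sigma_k(i+1)=r$ remains $\mathcal{D}_{i,k}$; on the $\sigma_k$ side, the conditional law is by definition $\mathcal{D}_{i,k}^{(r)}$. Pulling the triangle inequality through the average over $r$ yields
$$\bigl|\mathbb{E}[F(x_i^k)] - \mathbb{E}[f(x_i^k;\sigma_k(i+1))]\bigr| \leq \frac{1}{n}\sum_{r=1}^{n} \bigl|\mathbb{E}_{X \sim \mathcal{D}_{i,k}}[f(X;r)] - \mathbb{E}_{Y \sim \mathcal{D}_{i,k}^{(r)}}[f(Y;r)]\bigr|.$$

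Finally I would invoke Kantorovich duality (Theorem~\ref{thm:kantorovich_duality}). Assumption~\ref{as:lc} gives $\|\nabla f(\cdot;r)\| \leq G$, so $f(\cdot;r)/G \in \mathsf{Lip}_d(1)$ for every $r$. Applying the duality with the test function $f(\cdot;r)/G$ (and its negation to handle the absolute value) bounds each summand by $G\cdot \dw(\mathcal{D}_{i,k},\mathcal{D}_{i,k}^{(r)})$, which gives the claimed inequality.

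There is no real analytic obstacle here; the lemma is essentially an assembly of three ingredients the paper has already set up. The only subtle bookkeeping step is the conditioning: one must carefully verify that conditioning on $\sigma_k(i+1)=r$ leaves the law of $x_i(\sigma_k')$ unchanged (because $\sigma_k' \perp \sigma_k$), while forcing the law of $x_i(\sigma_k)$ to become $\mathcal{D}_{i,k}^{(r)}$. This is precisely the use of independence and Fubini underlying~\eqref{eq:eqv}, and once it is spelled out the rest of the argument is mechanical.
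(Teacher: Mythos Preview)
Your proposal is correct and follows essentially the same approach as the paper's proof: invoke~\eqref{eq:eqv}, condition on $\sigma_k(i+1)=r$ and apply the triangle inequality, then use the $G$-Lipschitz property of $f(\cdot;r)$ together with Kantorovich duality (Theorem~\ref{thm:kantorovich_duality}) to bound each term by $G\cdot\dw(\mathcal{D}_{i,k},\mathcal{D}_{i,k}^{(r)})$. The only cosmetic difference is that the paper bounds the $r$-th term by $\sup_{g\in\mathsf{Lip}_d(G)}(\mathbb{E}[g(x_i(\sigma_k'))]-\mathbb{E}[g(x_i(\sigma_k))\mid R_{i+1}=r])$ directly rather than first normalizing $f(\cdot;r)/G$, but this is the same step.
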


\begin{proof} Let $R_j := \sigma_k(j)$ for all $j \in [n]$. Using \eqref{eq:eqv}: 
\begin{align*}
\bigr|\mathbb{E}[F(x_i^k)] - \mathbb{E}[f(x_i^k;R_{i+1})]\bigr| &= \biggr|\mathbb{E}[f(x_i(\sigma^{\prime}_k);R_{i+1})] - \mathbb{E}[f(x_i(\sigma_k);R_{i+1})]\biggr| \\
&\leq  \tfrac{1}{n}\sum_{r=1}^{n}\bigr|\mathbb{E}\left[ f(x_i(\sigma^{\prime}_k);r)\right] - \mathbb{E} \left[f(x_i(\sigma_k);r)\bigr|R_{i+1}=r\right]\bigr|\\
&\leq \tfrac{1}{n}\sum_{r=1}^{n}\sup_{g\in \mathsf{Lip}_d(G)}\biggr(\mathbb{E}\left[ g(x_i(\sigma^{\prime}_k))\right] - \mathbb{E} \left[g(x_i(\sigma_k))\bigr|R_{i+1}=r\right]\biggr)\\
&= \tfrac{1}{n}\sum_{r=1}^n G\cdot \dw\left(\mathcal{D}_{i,k},\mathcal{D}_{i,k}^{(r)}\right),
\end{align*}
where the second step follows from triangle inequality and the fact that $x_i(\sigma_k^{\prime})$ is independent of $\sigma_k$. Second to last inequality follows from the fact that $f\in Lip_d(G)$ and the last inequality follows from Theorem~\ref{thm:kantorovich_duality}. We also used the fact that conditioned on $\sigma_k(i+1) = r$, $x_i^k(\sigma_k^{\prime}) \sim \mathcal{D}_{i,k}$
\end{proof}

From Lemma~\ref{lem:wasserstein_connection}, we see that we only need to upper bound $\dw\left(\mathcal{D}_{i,k},\mathcal{D}_{i,k}^{(r)}\right)$. We hope to use the definition of Wasserstein-1 distance (Definition~\ref{def:wasserstein_distance}) by constructing a nice coupling between $\mathcal{D}_{i,k}$ and $\mathcal{D}_{i,k}^{(r)}$, that we present in the following lemma; see Appendix~\ref{app:lem} for a proof of the lemma. 
\begin{lemma}
	Given $k$, suppose $\alpha_{k,i}$ is a non-increasing function of $i$ and $\alpha_{k,0} \leq \frac{2}{L}$. Then almost surely, 
	$\forall \ i \ \in \ [n]$, 
	\begin{equation}
	\|x_i(\sigma'_k)-x_i(\sigma_{k})\| \leq 2 G \alpha_{k,0} \cdot \left| \left\{j \leq i : \sigma_{k}(j) \neq \sigma'_{k}(j) \right\} \right|. \label{eq:strong_stability}
	\end{equation}
	Here $\left| \left\{j \leq i : \sigma_{k}(j) \neq \sigma'_{k}(j) \right\} \right|$ is the number of iterations till $i$ where the two permutations $\sigma_k$ and $\sigma'_k$ choose different component functions.
	\label{lem:stability}
\end{lemma}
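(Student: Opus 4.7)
The plan is to prove the bound by induction on $i \in \{0, 1, \dots, n\}$. Write $N_i := \left|\{j \leq i : \sigma_k(j) \neq \sigma'_k(j)\}\right|$ and $\delta_i := \|x_i(\sigma'_k) - x_i(\sigma_k)\|$, so the target inequality reads $\delta_i \leq 2G\alpha_{k,0} N_i$. The base case $i = 0$ is immediate: both chains share the same history over epochs $1,\dots,k-1$, hence $x_0(\sigma_k) = x_0(\sigma'_k) = x_0^k$, giving $\delta_0 = 0 = 2G\alpha_{k,0} N_0$.

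For the inductive step I would split on whether $\sigma_k(i+1) = \sigma'_k(i+1)$. In the agreement case, say both equal $r$, the two chains apply the identical update map $T(x) := \Pi_{\mathcal{W}}(x - \alpha_{k,i}\nabla f(x;r))$ at (possibly different) current iterates. The key analytic fact is that $T$ is nonexpansive in the Euclidean norm: since $f(\cdot;r)$ is convex (Problem~\eqref{eq:prob}) and $L$-smooth (Assumption~\ref{as:smooth}), $\nabla f(\cdot;r)$ is $(1/L)$-cocoercive, so expanding
\begin{align*}
\|(x - \alpha \nabla f(x;r)) - (y - \alpha \nabla f(y;r))\|^2 &= \|x-y\|^2 - 2\alpha\langle x-y, \nabla f(x;r)-\nabla f(y;r)\rangle \\
&\quad + \alpha^2\|\nabla f(x;r) - \nabla f(y;r)\|^2
\end{align*}
and using the hypothesis $\alpha_{k,i} \leq \alpha_{k,0} \leq 2/L$ bounds this by $\|x-y\|^2$; composition with the nonexpansive projection $\Pi_{\mathcal{W}}$ preserves nonexpansiveness. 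Hence $\delta_{i+1} \leq \delta_i$, and since $N_{i+1} = N_i$, the inductive bound is preserved. In the disagreement case, I would just use triangle inequality plus nonexpansiveness of the projection:
\[
\delta_{i+1} \leq \delta_i + \alpha_{k,i}\bigl(\|\nabla f(x_i(\sigma_k);\sigma_k(i+1))\| + \|\nabla f(x_i(\sigma'_k);\sigma'_k(i+1))\|\bigr) \leq \delta_i + 2G\alpha_{k,0},
\]
using Assumption~\ref{as:lc} and $\alpha_{k,i} \leq \alpha_{k,0}$. Since $N_{i+1} = N_i + 1$, the induction hypothesis gives $\delta_{i+1} \leq 2G\alpha_{k,0}(N_i + 1) = 2G\alpha_{k,0}N_{i+1}$, closing the induction.

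The main subtle point I expect to be the crux is the nonexpansiveness in the agreement case. Without convexity plus the step-size bound $\alpha_{k,i} \leq 2/L$, the best one can say for the gradient step is $\|T(x) - T(y)\| \leq (1 + \alpha L)\|x-y\|$, which would compound multiplicatively across iterations and destroy the claimed linear-in-$N_i$ bound. The monotonicity hypothesis on $\alpha_{k,i}$ and the cap $\alpha_{k,0} \leq 2/L$ are precisely what is needed so that every agreement step contributes a factor of exactly $1$, leaving only the disagreement steps to each add a clean $2G\alpha_{k,0}$; this is what makes the final bound independent of $i$ except through the counting $N_i$.
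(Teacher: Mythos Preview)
Your proposal is correct and follows essentially the same approach as the paper's proof: induction on $i$ with the same two-case split, using cocoercivity of $\nabla f(\cdot;r)$ (the paper's Lemma~\ref{lem:smooth_convex_property}) together with $\alpha_{k,i}\leq 2/L$ for nonexpansiveness in the agreement case, and triangle inequality plus the Lipschitz bound for the disagreement case. Your identification of the nonexpansiveness step as the crux is exactly right.
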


A key ingredient in the proof of the above lemma is the following standard result which says that gradient step with small enough step size is contracting for smooth convex functions. 
\begin{lemma}
If $g$ is convex and $\|\nabla^2 g\| \leq L$, then,
$$\|\nabla g(x) - \nabla g(y)\|^2 \leq L \langle \nabla g(x) -\nabla g(y),x-y\rangle.$$
\label{lem:smooth_convex_property}
\end{lemma}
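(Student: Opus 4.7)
The inequality to prove is the standard co-coercivity (Baillon-Haddad) property of gradients of smooth convex functions. The plan is to derive it in the textbook way: first establish the descent lemma from the Hessian bound, then apply it to a tilted version of $g$ whose minimizer is $x$ (respectively $y$), and finally symmetrize.

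First, I would record the descent lemma. Given $\|\nabla^2 g\| \leq L$, Taylor's theorem with integral remainder gives, for any $u,v \in \mathbb{R}^d$,
\[
g(v) \leq g(u) + \langle \nabla g(u), v-u\rangle + \tfrac{L}{2}\|v-u\|^2.
\]
Minimizing the right-hand side over $v$ (achieved at $v = u - \tfrac{1}{L}\nabla g(u)$) yields
\[
g\bigl(u - \tfrac{1}{L}\nabla g(u)\bigr) \leq g(u) - \tfrac{1}{2L}\|\nabla g(u)\|^2.
\]

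Next, define the tilted function $h(z) \defeq g(z) - \langle \nabla g(x), z-x\rangle$. Because the tilt is linear, $h$ is convex, satisfies $\|\nabla^2 h\| = \|\nabla^2 g\|\leq L$, and has $\nabla h(z) = \nabla g(z) - \nabla g(x)$. In particular $\nabla h(x) = 0$, so by convexity $x$ is a global minimizer of $h$. Applying the descent lemma above to $h$ at the point $y$ and then using $h(x) \leq h\bigl(y - \tfrac{1}{L}\nabla h(y)\bigr)$ gives
\[
h(x) \leq h(y) - \tfrac{1}{2L}\|\nabla h(y)\|^2.
\]
Unwinding the definition of $h$, this is
\[
\tfrac{1}{2L}\|\nabla g(y) - \nabla g(x)\|^2 \leq g(y) - g(x) - \langle \nabla g(x),\, y-x\rangle.
\]

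Finally, by symmetry of the argument (swap the roles of $x$ and $y$, using the tilt $z\mapsto g(z) - \langle \nabla g(y),z-y\rangle$), we obtain
\[
\tfrac{1}{2L}\|\nabla g(y) - \nabla g(x)\|^2 \leq g(x) - g(y) - \langle \nabla g(y),\, x-y\rangle.
\]
Adding the two inequalities cancels the function values and produces exactly
\[
\tfrac{1}{L}\|\nabla g(x) - \nabla g(y)\|^2 \leq \langle \nabla g(x) - \nabla g(y),\, x-y\rangle,
\]
which rearranges to the claim.

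There is no real obstacle here; the only thing to be careful about is that the minimizer argument requires both convexity of $g$ (so that a stationary point of $h$ is indeed a global minimum) and the Hessian bound (so that the descent lemma holds). Both are given in the hypothesis, so the proof is a short two-step symmetrization after the descent lemma.
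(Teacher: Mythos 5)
Your proof is correct. The paper does not include its own argument for this lemma---it simply cites Nesterov's Theorem~2.1.5---and your derivation (descent lemma from the Hessian bound, applied to the linearly tilted function whose minimizer is $x$, then symmetrized) is precisely that standard textbook proof, so the approaches coincide.
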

See~\cite[Theorem 2.1.5]{nesterov2013introductory} for a proof.

\subsection{Coupling $\sigma_{k}$ and $\sigma'_{k}$}
\label{subsec:wasserstein_coupling_bound}
In this section, we construct a coupling between $\sigma_{k}$ and $\sigma'_{k}$ that minimizes the bound in Lemma~\ref{lem:stability}.
Let $\perm_n$ be the set of all permutations over $n$ letters. For $a,b \in [n]$, we define the exchange function $E_{a,b} : \perm_n \to \perm_n$: for any $\tau \in \perm_n$, $E_{a,b}(\tau) $ gives a new permutation where $a$-th and $b$-th entries of $\tau$ are exchanged and it keeps everything else same. We construct the operator $\Lambda_{r,i} : \perm_n \to \perm_n$:
$$\Lambda_{r,i} (\tau) = \begin{cases}
\tau &\qquad \text{if } \tau(i+1)= r
\\ E_{i+1,j}(\tau) &\qquad \text{if } \tau(j) = r \text{ and } j \neq i+1
\end{cases}$$
Basically, $\Lambda_{r,i}$ makes a single swap so that $i+1$-th position of the permutation is $r$. Clearly, if $\sigma_k$ is a uniformly random permutation, then $\Lambda_{r,i}(\sigma_k)$ has the same distribution as $\sigma_k | \sigma_k(i+1)  = r$. We use the coupling characterization of $\dw$ to conclude:

\begin{lemma}
\label{lem:wasserstein_stability_bound}
Let $k$ be fixed. When $\alpha_{k,0} \leq \frac{2}{L}$ and $\alpha_{k,i}$ be a non-increasing function of $i$,
$$\dw\left(\mathcal{D}_{i,k},\mathcal{D}_{i,k}^{(r)}\right)\leq \dwtwo\left(\mathcal{D}_{i,k},\mathcal{D}_{i,k}^{(r)}\right) \leq 2\alpha_{k,0}G,$$
where $\mathcal{D}_{i,k},$ $\mathcal{D}_{i,k}^{(r)}$ are defined above. Consequently, from Lemma~\ref{lem:wasserstein_connection}, we conclude:
$$\bigr|\mathbb{E}[F(x_i^k)] - \mathbb{E}[f(x_i^k;\sigma_k(i+1))]\bigr| \leq 2\alpha_{k,0}G^2.$$
\end{lemma}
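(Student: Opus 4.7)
The plan is to exhibit an explicit coupling between $\mathcal{D}_{i,k}$ and $\mathcal{D}_{i,k}^{(r)}$ via the operator $\Lambda_{r,i}$ introduced just above the lemma, and then invoke Lemma~\ref{lem:stability} along this coupling. Concretely, I would let $\sigma_k$ be a uniformly random element of $\perm_n$ and set $\sigma'_k \defeq \Lambda_{r,i}(\sigma_k)$. Two distributional facts then suffice: (i) $x_i(\sigma_k) \sim \mathcal{D}_{i,k}$ by definition, and (ii) $\sigma'_k$ has the law of $\sigma_k$ conditioned on $\sigma_k(i+1)=r$, so $x_i(\sigma'_k) \sim \mathcal{D}_{i,k}^{(r)}$. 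Fact~(ii) is the content of the sentence preceding the lemma; it can be verified by a quick preimage count, noting that for any target $\pi$ with $\pi(i+1)=r$, the preimages of $\pi$ under $\Lambda_{r,i}$ are $\pi$ itself together with $E_{i+1,j}(\pi)$ for $j \ne i+1$, giving $n$ preimages each of mass $1/n!$ and hence total mass $1/(n-1)!$, which is exactly the conditional density.

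The key combinatorial observation is that for any realization of $\sigma_k$, the permutations $\sigma_k$ and $\sigma'_k = \Lambda_{r,i}(\sigma_k)$ differ in at most two positions: the position $i+1$ and the (unique) index $j$ with $\sigma_k(j)=r$ (and they are identical when $j=i+1$). Restricted to indices in $\{1,\dots,i\}$, they therefore disagree in at most one index:
$$\left|\{j \le i : \sigma_k(j) \ne \sigma'_k(j)\}\right| \le 1 \quad \text{almost surely}.$$
Applying Lemma~\ref{lem:stability} under this coupling, using the hypotheses $\alpha_{k,0} \le 2/L$ and $\alpha_{k,i}$ non-increasing in $i$, yields $\|x_i(\sigma'_k) - x_i(\sigma_k)\| \le 2 G \alpha_{k,0}$ almost surely, and in particular the same bound for $\|x_i(\sigma'_k) - x_i(\sigma_k)\|^2 \le 4 G^2 \alpha_{k,0}^2$.

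Squaring, taking expectations, and using the definition of $\dwtwo$ as an infimum over couplings then gives $\dwtwo(\mathcal{D}_{i,k},\mathcal{D}_{i,k}^{(r)}) \le 2 G \alpha_{k,0}$, and Jensen's inequality yields $\dw \le \dwtwo$. Plugging these into Lemma~\ref{lem:wasserstein_connection}, the sum over $r \in [n]$ of the identical bound $2 G \alpha_{k,0}$ produces a factor of $n$ that cancels the $G/n$ prefactor, leaving the claimed $2 \alpha_{k,0} G^2$. The only non-routine step is the combinatorial observation above, but it is immediate from the definition of $\Lambda_{r,i}$ as a single transposition; the heavy lifting, namely translating a coordinate-wise disagreement count into a norm bound on iterates, has already been absorbed into Lemma~\ref{lem:stability}.
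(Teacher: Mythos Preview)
Your proposal is correct and essentially identical to the paper's own proof: you use the same coupling $\sigma'_k=\Lambda_{r,i}(\sigma_k)$, the same observation that $\sigma_k$ and $\sigma'_k$ disagree in at most one index $j\le i$, invoke Lemma~\ref{lem:stability} to get the almost-sure bound $2\alpha_{k,0}G$, and then pass to $\dwtwo$ and $\dw$ and Lemma~\ref{lem:wasserstein_connection}. The only differences are that you spell out the preimage-count justification for the law of $\Lambda_{r,i}(\sigma_k)$ and explicitly mention Jensen for $\dw\le\dwtwo$, both of which the paper leaves implicit.
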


\begin{proof}
Let $\sigma_k$ be a uniformly random permutation and $r \in [n]$. Therefore, $x_i(\sigma_k) \sim \mathcal{D}_{i,k}$ and $x_i\left(\Lambda_{r,i}(r)\right) \sim \mathcal{D}_{i,k}^{(r)}$. This gives a coupling between $\mathcal{D}_{i,k}$ and  $\mathcal{D}_{i,k}^{(r)}$. By definition of Wasserstein distance:

\begin{equation}
 \dwtwo\left(\mathcal{D}_{i,k},\mathcal{D}_{i,k}^{(r)}\right) \leq \sqrt{\mathbb{E}\|x_i(\sigma_k) - x_i(\Lambda_{r,i}\sigma_k) \|^2}
 \label{eq:coupling_bound_in_action}
 \end{equation}

It is clear that $\bigr|\{j \leq i : \sigma_k(j) \neq \left[\Lambda_{r,i}(\sigma_k)\right](j) \} \bigr|\leq 1$ almost surely. Therefore, from Lemma~\ref{lem:stability}, we conclude that $\|x_i(\sigma_k) - x_i(\Lambda_{r,i}\sigma_k) \| \leq 2\alpha_{k,0}G$ almost surely. Together with Equation~\ref{eq:coupling_bound_in_action}, and the fact that $\dwtwo \geq \dw$ we conclude the result.
\end{proof}
The lemmas presented above tightly bound the difference in suboptimality between iterates of \sgd and \sgdwor. These will be used in proving Theorems~\ref{thm:main_result_3} and~\ref{thm:main_result_1}, matching the rates of \sgd. For Theorem~\ref{thm:main_result_2}, we need to show that there is some amount of automatic variance reduction while running \sgdwor. In order to do this, we need to show that the iterates $x_i^k$ do not move much when they are close to the optimum. The following lemma makes this precise.
\begin{lemma}
\label{lem:temporal_regularity}
Let $\alpha_{k,0} < \frac{2}{L}$ and $\alpha_{k,j}$ be a non-increasing sequence in $j$ for a given $k$. For any $i \in [n]$, we have:
\begin{align*}
	\mathbb{E}[\|x_i^k - x_0^k\|^2] &\leq 5i\alpha_{k,0}^2G^2 + 2i\alpha_{k,0}\cdot \mathbb{E}\left[F(x_0^k) - F(x^{*})\right],\\\mbox{ and }\ 
	\mathbb{E}[d_{i,k}^2] &\leq \mathbb{E}[d_{0,k}^2] + 5i\alpha_{k,0}^2\cdot G^2,
\end{align*}
where we recall that $d_{i,k} \defeq \|x_i^k - x^*\|$.
\end{lemma}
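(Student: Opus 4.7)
The plan is to establish both bounds by induction on $i$, where each inductive step expands the standard SGD one-step recursion, uses convexity of the component function chosen at step $i$, and then corrects the resulting bias using Lemma~\ref{lem:wasserstein_stability_bound} (which is the only tool that lets us convert $\mathbb{E}[f(x_i^k;\sigma_k(i+1))]$ into something involving $\mathbb{E}[F(x_i^k)]$). Throughout I set $\alpha := \alpha_{k,0}$, write $r_{i+1}:=\sigma_k(i+1)$, and repeatedly use $\alpha_{k,i}\le \alpha$.

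For the second inequality, expand
\[
\|x_{i+1}^k-x^*\|^2
= \|x_i^k-x^*\|^2 - 2\alpha_{k,i}\langle x_i^k-x^*,\nabla f(x_i^k;r_{i+1})\rangle + \alpha_{k,i}^2\|\nabla f(x_i^k;r_{i+1})\|^2.
\]
Bound the last term by $\alpha^2 G^2$ via Assumption~\ref{as:lc}. For the cross term, use convexity of $f(\cdot;r_{i+1})$ to get
$\langle x_i^k-x^*,\nabla f(x_i^k;r_{i+1})\rangle \ge f(x_i^k;r_{i+1}) - f(x^*;r_{i+1})$.
Take expectations: since $x^*$ is deterministic, $\mathbb{E}[f(x^*;r_{i+1})]=F(x^*)$, while Lemma~\ref{lem:wasserstein_stability_bound} gives $\mathbb{E}[f(x_i^k;r_{i+1})]\ge \mathbb{E}[F(x_i^k)]-2\alpha G^2 \ge F(x^*)-2\alpha G^2$. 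Combining these yields
\[
\mathbb{E}\|x_{i+1}^k-x^*\|^2 \le \mathbb{E}\|x_i^k-x^*\|^2 + 4\alpha^2 G^2 + \alpha^2 G^2,
\]
and telescoping gives the claimed $\mathbb{E}[d_{i,k}^2]\le \mathbb{E}[d_{0,k}^2]+5i\alpha^2 G^2$.

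For the first inequality, expand $\|x_{i+1}^k-x_0^k\|^2$ analogously:
\[
\|x_{i+1}^k-x_0^k\|^2 \le \|x_i^k-x_0^k\|^2 - 2\alpha_{k,i}\bigl[f(x_i^k;r_{i+1})-f(x_0^k;r_{i+1})\bigr] + \alpha^2 G^2,
\]
again using convexity of $f(\cdot;r_{i+1})$ on the cross term and Assumption~\ref{as:lc} on the squared norm. Taking expectations, the quantity $\mathbb{E}[f(x_0^k;r_{i+1})]$ equals $\mathbb{E}[F(x_0^k)]$ because $x_0^k$ is independent of $\sigma_k$ (it is determined by the earlier epochs), while $\mathbb{E}[f(x_i^k;r_{i+1})]\ge \mathbb{E}[F(x_i^k)]-2\alpha G^2 \ge F(x^*)-2\alpha G^2$ by the same application of Lemma~\ref{lem:wasserstein_stability_bound}. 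Thus
\[
\mathbb{E}\|x_{i+1}^k-x_0^k\|^2 \le \mathbb{E}\|x_i^k-x_0^k\|^2 + 2\alpha\,\mathbb{E}[F(x_0^k)-F(x^*)] + 5\alpha^2 G^2,
\]
and telescoping from $0$ to $i$ gives the stated bound.

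The main obstacle is precisely the step where we need to compare $\mathbb{E}[f(x_i^k;\sigma_k(i+1))]$ to $\mathbb{E}[F(x_i^k)]$: this is what makes SGDo harder than SGD, because $\sigma_k(i+1)$ is not independent of $x_i^k$. The entire coupling/Wasserstein machinery of Section~\ref{sec:coupling_and_wasserstein} was designed to control exactly this bias by $2\alpha G^2$, so once that machinery is in place the remaining work is a routine one-step expansion plus telescoping. A minor technicality is that the step sizes $\alpha_{k,i}$ may vary; non-increasingness plus the standing hypothesis $\alpha_{k,0}\le 2/L$ lets us uniformly replace $\alpha_{k,i}$ by $\alpha_{k,0}$ wherever convenient (using $F(x_0^k)\ge F(x^*)$ to absorb the non-negative drift term), which is what produces the $\alpha_{k,0}$'s in the final bounds.
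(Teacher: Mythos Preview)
Your proposal is correct and follows essentially the same route as the paper's own proof: expand the one-step recursion, use convexity of $f(\cdot;r_{i+1})$ on the cross term, invoke Lemma~\ref{lem:wasserstein_stability_bound} to replace $\mathbb{E}[f(x_i^k;\sigma_k(i+1))]$ by $\mathbb{E}[F(x_i^k)]$ up to $2\alpha_{k,0}G^2$, use $F(x_i^k)\ge F(x^*)$ (respectively independence of $x_0^k$ from $\sigma_k$), and telescope. The only cosmetic difference is that the paper splits $\mathbb{E}[F(x_0^k)-f(x_j^k;R_{j+1})]$ into $\mathbb{E}[F(x_0^k)-F(x_j^k)]+\mathbb{E}[F(x_j^k)-f(x_j^k;R_{j+1})]$ and bounds each piece separately, whereas you bound $\mathbb{E}[f(x_i^k;r_{i+1})]\ge F(x^*)-2\alpha G^2$ in one stroke; the resulting per-step increment $2\alpha\,\mathbb{E}[F(x_0^k)-F(x^*)]+5\alpha^2G^2$ is identical.
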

See Appendix~\ref{app:lem} for a detailed proof of the above lemma. 
\section{Proofs of Main Results}
\label{sec:proofs}
In this section, we will present proofs of Theorems~\ref{thm:main_result_2},~\ref{thm:main_result_3} and~\ref{thm:main_result_1} using the results from the previous section.
\subsection{Proof of Theorem~\ref{thm:main_result_2}}
In this subsection, for the sake of clarity of notation, we take $R_j \defeq \sigma_k(j)$ for every $ j\in [n]$.
Recall the definition $d_{i,k}:= \|x_i^k-x^{*}\|$. From the definition of~\sgdwor, and the choice of step sizes $\alpha_{k,i} = \alpha = 4l \frac{\log{nK}}{\mu nK}$, we have:

$$x_{0}^{k+1} = x_{0}^{k} - \alpha \sum_{i=0}^{n-1}\nabla f(x_i^{k},R_{i+1}).$$

Using the hypothesis that $\alpha \leq \frac{2}{L}$ (since $\tfrac{\mu}{L} \leq 1$) and taking norm squared on both sides,
\begin{align}
d_{0,k+1}^2  &= d_{0,k}^2 - 2\alpha\sum_{i=0}^{n-1}\langle \nabla f(x_i^{k},R_{i+1}), x_{0}^{k}-x^{*}\rangle + \alpha^2 \biggr\| \sum_{i=0}^{n-1}\nabla f(x_i^{k},R_{i+1})\biggr\|^2 \nonumber\\
&= d_{0,k}^2 -2n\alpha\langle \nabla F(x_{0}^k),x_{0}^k -x^{*}\rangle - 2\alpha\sum_{i=0}^{n-1}\langle \nabla f(x_i^{k},R_{i+1}) -\nabla F(x_0), x_{0}^{k}-x^{*}\rangle \nonumber \\&\quad + \alpha^2\biggr \| \sum_{i=0}^{n-1}\nabla f(x_i^{k},R_{i+1})\biggr\|^2\nonumber \\
&\leq d_{0,k}^2(1-n\alpha \mu) - 2n\alpha(F(x_{0}^{k}) - F(x^{*}))  -2\alpha\sum_{i=0}^{n-1}\langle \nabla f(x_i^{k},R_{i+1}) -\nabla F(x_0), x_{0}^{k}-x^{*}\rangle \nonumber \\&\quad + \alpha^2\biggr\| \sum_{i=0}^{n-1}\nabla f(x_i^{k},R_{i+1})\biggr\|^2, \label{eq:distance_recursion}
\end{align}
where we used strong convexity of $F$ in the third step. We consider the term:
\begin{align*}
&T_1  \defeq -2\alpha\sum_{i=0}^{n-1}\langle \nabla f(x_i^{k},R_{i+1}) -\nabla F(x_0), x_{0}^{k}-x^{*}\rangle \\ &= -2\alpha \sum_{i=0}^{n-1}\langle\nabla f(x_i^{k};R_{i+1})-\nabla f(x_0^{k};R_{i+1}), x_0^k-x^{*}\rangle.
\end{align*}
Taking expectation, we have
\begin{align}
&\mathbb{E}[T_1] \nonumber\\ &= -2\alpha \mathbb{E}\sum_{i=0}^{n-1}\langle\nabla f(x_i^{k};R_{i+1})-\nabla f(x_0^{k};R_{i+1}), x_0^k-x^{*}\rangle \nonumber \\
&\leq 2\alpha L\sum_{i=0}^{n-1}\mathbb{E}\left[\|x_i^k-x_0^k\|.\|x_0^k-x^{*}\|\right] \nonumber\\ &\leq  2\alpha L\sum_{i=0}^{n-1}\sqrt{\mathbb{E}\|x_i^k-x_0^k\|^2}\sqrt{\mathbb{E}\|x_0^k-x^{*}\|^2}\nonumber \\
&\leq 2\alpha L n \sqrt{\mathbb{E}[d_{0,k}^2]}\sqrt{5n\alpha^2G^2 + 2n\alpha\mathbb{E}\left[F(x_0^k) - F(x^*)\right]},
 \label{eq:expectation_bound_1}
\end{align}
where we used Cauchy-Schwarz and smoothness in the second step and Lemma~\ref{lem:temporal_regularity} in the last step.
We now consider $$T_2 \defeq \alpha^2 \biggr\| \sum_{i=0}^{n-1}\nabla f(x_i^{k};R_{i+1})\biggr\|^2\,.$$
We use the fact that: $\nabla F(x^{*}) = 0 = \sum_{i=0}^{n-1} \nabla f(x^{*};R_{i+1})$ in the equation above to conclude:
\begin{align*}
T_2 &= \alpha^2 \biggr\| \sum_{i=0}^{n-1}\nabla f(x_i^{k};R_{i+1}) - \nabla f(x^{*};R_{i+1})\biggr\|^2 \\
&\leq \alpha^2 \left[\sum_{i=0}^{n-1}\biggr\|\nabla f(x_i^{k};R_{i+1}) - \nabla f(x^{*};R_{i+1})\biggr\|\right]^2 \\
&\leq \alpha^2 L^2\left[\sum_{i=0}^{n-1}\bigr\|x_i^{k} - x^{*}\bigr\|\right]^2 = \alpha^2 L^2\sum_{i=1}^{n}\sum_{j=1}^{n} d_{i,k}d_{j,k}.
\end{align*}
Taking expectation, we have
\begin{align}
\mathbb{E}[T_2] &\leq \alpha^2 L^2 \sum_{i=1}^{n}\sum_{j=1}^{n}\mathbb{E}[d_{i,k}d_{j,k}] \nonumber \\
&\leq \alpha^2 L^2 \sum_{i=1}^{n} \sum_{j=1}^{n} \sqrt{\mathbb{E} [d_{i,k}^2]}\sqrt{\mathbb{E}[d_{j,k}^2]} \nonumber \\
&\leq \alpha^2L^2n^2\left[ \mathbb{E}[d_{0,k}^2] + 5n\alpha^2 G^2\right], \label{eq:expectation_bound_2}
\end{align}
where we again used Cauchy-Schwarz inequality and Lemma~\ref{lem:temporal_regularity}.
Applying AM-GM inequality on~\eqref{eq:expectation_bound_1}, we have:
\begin{align*}
&\mathbb{E}[T_1] \leq \alpha L n \left[\tfrac{ \mu \mathbb{E}[d_{0,k}^2]}{4L}+ \tfrac{4L\left(5n\alpha^2G^2 + 2n\alpha\mathbb{E}\left[F(x_0^k) - F(x^*)\right]\right)}{\mu}\right] \\
&= \tfrac{\alpha \mu n}{4}\mathbb{E}[d_{0,k}^2] + \tfrac{20L^2\alpha^3n^2G^2}{\mu} + \tfrac{8\alpha^2L^2n^2}{\mu}\mathbb{E}\left[F(x_0^k) - F(x^*)\right]
\end{align*}
Plugging the inequality above and~\eqref{eq:expectation_bound_2} in~\eqref{eq:distance_recursion}, we conclude:
\begin{align}
\mathbb{E}[d_{0,k+1}^2]  &\leq \mathbb{E}[d_{0,k}^2](1-n\alpha \mu) - 2n\alpha\mathbb{E}(F(x_{0}^{k}) - F(x^{*}))  +  \tfrac{\alpha \mu n}{4}\mathbb{E}[d_{0,k}^2]+ \tfrac{20L^2\alpha^3n^2G^2}{\mu} + \tfrac{8\alpha^2L^2n^2\mathbb{E}\left[F(x_0^k) - F(x^*)\right]}{\mu} \nonumber \\&\quad+ \alpha^2L^2 n^2 \mathbb{E}[d_{0,k}^2] + 5\alpha^4L^2G^2n^3\nonumber \\
&\leq \mathbb{E}[d_{0,k}^2](1-\tfrac{3n\alpha \mu}{4}+ \alpha^2n^2L^2)  - 2n\alpha\left(1-\tfrac{4\alpha n L^2}{\mu}\right)\mathbb{E}\left[F(x_{0}^{k}) - F(x^{*})\right]  + \tfrac{20L^2\alpha^3n^2G^2}{\mu}  + 5\alpha^4L^2G^2n^3.
\label{eq:distance_recursion_specialized}
\end{align}

By our choice of step sizes, it is clear that $1-\tfrac{3n\alpha\mu}{4}+ \alpha^2n^2L^2 \leq 1- \frac{n\alpha\mu}{2}$. In~\eqref{eq:distance_recursion_specialized}, we use the fact that $F(x_0^k)\geq F(x^*)$ to conclude:
$$\mathbb{E}[d_{0,k}^2] \leq \left(1- \tfrac{n\alpha\mu}{2}\right)\mathbb{E}[d_{0,k-1}^2] + \tfrac{20L^2\alpha^3n^2G^2}{\mu}  + 5\alpha^4L^2G^2n^3 $$

Unrolling the recursion above, we have:
\begin{align*}
\mathbb{E}[d_{0,k}^2] &\leq \left(1- \tfrac{n\alpha\mu}{2}\right)^k d_{0,1}^2 + \sum_{t=0}^{\infty}\left(1- \tfrac{n\alpha\mu}{2}\right)^t\left[\tfrac{20L^2\alpha^3n^2G^2}{\mu}  + 5\alpha^4L^2G^2n^3\right] \\
&\leq \exp\left(- \tfrac{nk\alpha\mu}{2}\right)d_{0,1}^2 + \tfrac{2}{n\alpha \mu}\left[\tfrac{20L^2\alpha^3n^2G^2}{\mu}  + 5\alpha^4L^2G^2n^3\right] \\
&\leq \exp\left(- \tfrac{nk\alpha\mu}{2}\right)d_{0,1}^2 + \tfrac{40L^2\alpha^2nG^2}{\mu^2}+\tfrac{10\alpha^3L^2G^2n^2}{\mu}
\end{align*}

Taking $\alpha = 4l \frac{\log{nK}}{\mu nK}$ and $k = \frac{K}{2}$, we have:
\begin{align*}
\mathbb{E}[d_{0,\tfrac{K}{2}}^2] &\leq \tfrac{d_{0,1}^2}{(nK)^l} + \tfrac{40L^2\alpha^2nG^2}{\mu^2}+\tfrac{10\alpha^3L^2G^2n^2}{\mu}.
\end{align*}
We now analyze the suffix averaging scheme given. Adding~\eqref{eq:distance_recursion_specialized} from $k = \frac{K}{2}$ to $k =K$, we conclude:

\begin{align*}
 n\alpha\tfrac{\sum_{k=\lceil\tfrac{K}{2}\rceil}^{K}\mathbb{E}\left[F(x_{0}^{k})- F(x^{*})\right]}{K-\lceil\tfrac{K}{2} \rceil +1}  \leq \tfrac{\mathbb{E}[d_{0,K/2}^2]}{K-\lceil\tfrac{K}{2} \rceil +1} + \tfrac{20L^2\alpha^3n^2G^2}{\mu}  + 5\alpha^4L^2G^2n^3.
\end{align*}
Here we have used the fact that $2n\alpha\left(1-\frac{4\alpha n L^2}{\mu}\right) \geq n\alpha$.
Since $F(\hat{x}) \leq \frac{\sum_{k=\lceil\tfrac{K}{2}\rceil}^{K}F(x_{0}^{k})}{K-\lceil\tfrac{K}{2} \rceil +1} $ by convexity of $F$, we have:
\begin{align*}
  \mathbb{E}\left[F(\hat{x})- F(x^{*})\right]  &\leq \tfrac{2}{nK\alpha}\left[ \tfrac{d_{0,1}^2}{(nK)^l}\right] + \left[\tfrac{80L^2\alpha G^2}{\mu^2 K}+\tfrac{20\alpha^2L^2G^2n}{\mu K}\right] + \tfrac{20L^2\alpha^2nG^2}{\mu}  + 5\alpha^3L^2G^2n^2 \\
 &= O\left(\mu\tfrac{d_{0,1}^2}{(nK)^l}\right) + O\left(\tfrac{\kappa^2G^2}{\mu}\tfrac{(\log{nK})^2}{nK^2}\right).
\end{align*}
This proves the theorem.

\subsection{Proof of Theorem~\ref{thm:main_result_1}}
We note that we have taken $\alpha_{k,i} = \alpha < \frac{2}{L}$.
By definition,
$x_{i+1}^{k} = \Pi_{\mathcal{W}}\left( x_{i}^{k} - \alpha \nabla f(x_i^{k};\sigma_k(i+1))\right)$.  We take $r = \sigma_k(i+1)$ below. Taking norm squared and using Lemma~\ref{lem:convex_contraction}
\begin{align}
\|x_{i+1}^{k}-x^{*}\|^2 &\leq \|x_i^k-x^{*}\|^2 +\alpha^2\|\nabla f(x^k_i;r)\|^2  -2\alpha
\langle \nabla f(x_i;r),x_i^k-x^{*}\rangle\nonumber \\
&\leq \|x_i^k-x^{*}\|^2 + \alpha^2G^2 - 2\alpha (f(x^k_i;r)-f(x^{*};r)) \nonumber
\end{align}
In the last step we have used convexity of of $f(;j)$ and the fact that $\|\nabla f(;j)\| \leq G$. Taking expectation above, and noting that $\mathbb{E}f(x^{*};\sigma_k(i+1)) = F(x^*)$ and using Lemma~\ref{lem:wasserstein_stability_bound}, we have:
\begin{align}
\mathbb{E}\|x_{i+1}^{k}-x^{*}\|^2 &\leq \mathbb{E}\|x_i^k-x^{*}\|^2 - 2\alpha \mathbb{E}(F(x^k_i)-F(x^{*})) + 5\alpha^2G^2 \label{eq:no_strong_convex_equation}
\end{align}
Summing from $i = 0$ to $n-1$ and $k=1$ to $K$, we conclude:

\begin{align*}
 \tfrac{1}{nK}\sum_{k=1}^{K}\sum_{i=0}^{n-1}(F(x^k_i)-F(x^{*})) &\leq \tfrac{\mathbb{E}\|x_{0}^{1}-x^{*}\|^2}{2\alpha nK}   + \tfrac{5}{2}\alpha G^2 \\
 &\leq  \tfrac{D^2}{2nK}\max(\tfrac{L}{2}, \tfrac{G\sqrt{nK}}{D})+ \tfrac{5G^2}{2} \min (\tfrac{2}{L}, \tfrac{D}{G\sqrt{nK}})\\
 &\leq  \tfrac{D^2}{2nK}\left(\tfrac{L}{2} + \tfrac{G\sqrt{nK}}{D}\right)+ \tfrac{5G^2}{2}. \tfrac{D}{G\sqrt{nK}}\\
 &= \tfrac{D^2L}{4nK} + \tfrac{3GD}{\sqrt{nK}}.
\end{align*}
By convexity of $F(\cdot)$, we conclude: $$F(\hat{x}) \leq  \tfrac{1}{nK}\sum_{k=1}^{K}\sum_{i=0}^{n-1}F(x^k_i)\,.$$
\section{Conclusions}\label{sec:conc}
In this paper, we study stochastic gradient descent with out replacement (\sgdwor), which is widely used in practice. When the number of passes is large, we present the first convergence result for \sgdwor, that is faster than \sgd, under standard smoothness, strong convexity and Lipschitz assumptions where as prior work uses additional Hessian Lipschitz assumption. Our convergence rates also improve upon existing results in practically interesting regimes. When the number of passes is small, we present convergence results for \sgdwor that match those of \sgd for general smooth convex functions. These are the first such results for general smooth convex functions as previous work only showed such results for generalized linear models. In order to prove these results, we use techniques from optimal transport theory to couple variants of \sgd and relate their performances. These ideas may be of independent interest in the analysis of \sgd style algorithms with some dependencies.

\bibliographystyle{unsrt}  

\bibliography{bibliography}

\appendix

\section{Supplementary Material}
\begin{lemma}
	\label{lem:convex_contraction}
	Consider $\mathbb{R}^d$ endowed with the standard inner product. For any convex set $\mathcal{W} \subset \mathbb{R}^d$ and the associated projection operator $\Pi_{\mathcal{W}}$, we have:
	
	$$\|\Pi_{\mathcal{W}}(a) - \Pi_{\mathcal{W}}(b)\| \leq \|a-b\|$$
	For all $a,b \in \mathbb{R}^d$
\end{lemma}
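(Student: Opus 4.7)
The plan is to use the standard variational (first-order optimality) characterization of the Euclidean projection onto a closed convex set and then combine two such inequalities with Cauchy-Schwarz. Let $p_a \defeq \Pi_{\mathcal{W}}(a)$ and $p_b \defeq \Pi_{\mathcal{W}}(b)$. The key fact I would invoke first is: for any $c \in \mathbb{R}^d$, $p_c = \Pi_{\mathcal{W}}(c)$ is characterized as the unique point in $\mathcal{W}$ satisfying
\begin{equation*}
\langle c - p_c, y - p_c \rangle \leq 0 \qquad \forall\, y \in \mathcal{W}.
\end{equation*}
This is a direct consequence of the strict convexity of $\|\cdot\|^2$ on the closed convex set $\mathcal{W}$, combined with the first-order optimality condition for the minimizer of $y \mapsto \tfrac{1}{2}\|c - y\|^2$ over $\mathcal{W}$. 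I would state and briefly justify this characterization as the single non-trivial ingredient.

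Next, I would specialize this inequality with $c = a$, $y = p_b$ (legal since $p_b \in \mathcal{W}$) and then with $c = b$, $y = p_a$, obtaining
\begin{align*}
\langle a - p_a,\ p_b - p_a \rangle &\leq 0,\\
\langle b - p_b,\ p_a - p_b \rangle &\leq 0.
\end{align*}
Adding these two inequalities and rearranging groups the $p$'s against the $p$'s and the $a,b$'s against each other, which after a short expansion gives
\begin{equation*}
\|p_a - p_b\|^2 \leq \langle a - b,\ p_a - p_b \rangle.
\end{equation*}
Applying Cauchy-Schwarz to the right-hand side and dividing through by $\|p_a - p_b\|$ (the case $p_a = p_b$ being trivial) yields $\|p_a - p_b\| \leq \|a - b\|$, which is the desired conclusion.

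I do not anticipate a genuine obstacle here: the proof is a two-line manipulation once the variational characterization is in hand. The only step that requires any care is justifying that characterization itself, which relies on the closedness and convexity of $\mathcal{W}$ to ensure existence and uniqueness of the projection, and on differentiating $\tfrac{1}{2}\|c-y\|^2$ along feasible directions to produce the inequality. Everything after that is algebra plus Cauchy-Schwarz.
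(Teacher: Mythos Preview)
Your proposal is correct and follows essentially the same route as the paper: invoke the variational characterization $\langle c - \Pi_{\mathcal{W}}(c), y - \Pi_{\mathcal{W}}(c)\rangle \leq 0$ for $y\in\mathcal{W}$ (the paper cites Nesterov for this), add the two instances with $(c,y)=(a,p_b)$ and $(c,y)=(b,p_a)$ to obtain $\|p_a-p_b\|^2 \leq \langle a-b, p_a-p_b\rangle$, and finish with Cauchy--Schwarz. There is no meaningful difference between your argument and the paper's.
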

\begin{proof}
By Lemma 3.1.4 in \cite{nesterov2013introductory}, we conclude: 
$$\langle a - \Pi_{\mathcal{W}}(a), \Pi_{\mathcal{W}}(b)- \Pi_{\mathcal{W}}(a)\rangle \leq 0 \,.$$
Similarly, 
$$\langle  b- \Pi_{\mathcal{W}}(b),\Pi_{\mathcal{W}}(a) - \Pi_{\mathcal{W}}(b)\rangle \leq 0 \,.$$

Adding the equations above, we conclude:
$$\|\Pi_{\mathcal{W}}(a) - \Pi_{\mathcal{W}}(b)\|^2 \leq \langle a- b, \Pi_{\mathcal{W}}(a) - \Pi_{\mathcal{W}}(b)\rangle$$
Using Cauchy-Schwarz inequality on the RHS, we conclude the result.
\end{proof}

\subsection{Proof of Theorem~\ref{thm:main_result_3}}

We have chosen $\alpha_{k,i} = \alpha =  \min\left(\tfrac{2}{L},4l \tfrac{\log{nK}}{\mu nK}\right)$.
By definition:
$x_{i+1}^{k} = \Pi_{\mathcal{W}}\left( x_{i}^{k} - \alpha \nabla f(x_i^{k};\sigma_k(i+1))\right)$.

Taking norm squared and using Lemma~\ref{lem:convex_contraction}
\begin{align}
\|x_{i+1}^{k}-x^{*}\|^2 &\leq \|x_i^k-x^{*}\|^2 -2\alpha
\langle \nabla f(x^k_i;\sigma_k(i+1)),x_i^k-x^{*}\rangle +\alpha^2\|\nabla f(x^k_i;\sigma_k(i+1))\|^2 \nonumber \\
&\leq  \|x_i^k-x^{*}\|^2 -2\alpha
\langle \nabla f(x^k_i;\sigma_k(i+1)),x_i^k-x^{*}\rangle +\alpha^2G^2 \nonumber \\
&\leq  \|x_i^k-x^{*}\|^2 -2\alpha
\langle \nabla F(x^k_i),x_i^k-x^{*}\rangle +2\alpha
\langle \nabla F(x^k_i) - \nabla f(x^k_i;\sigma_k(i+1)),x_i^k-x^{*}\rangle +\alpha^2G^2 \nonumber \\
&\leq \|x_i^k-x^{*}\|^2(1-\alpha\mu) -2\alpha\left[F(x^k_i)- F(x^*)\right]+2\alpha
R_{i,k} +\alpha^2G^2 
\label{eq:fundamental_equation}
\end{align}
We have used strong convexity of $F(\cdot)$ in the fourth step. Here $R_{i,k} := \langle \nabla F(x^k_i) - \nabla f(x^k_i;\sigma_k(i+1)),x_i^k-x^{*}\rangle$. We will bound $\mathbb{E}[R_{i,k}]$.

Clearly, 
\begin{align*}
R_{i,k} &= \frac{1}{n}\sum_{r=1}^{n}\langle \nabla f(x^k_i;r) ,x_i^k-x^{*}\rangle -\langle  \nabla f(x^k_i;\sigma_k(i+1)),x_i^k-x^{*}\rangle
\end{align*}
Recall the definition of $\mathcal{D}_{i,k}$ and $\mathcal{D}_{i,k}^{(r)}$ from Section~\ref{sec:coupling_and_wasserstein}. Let $Y \sim \mathcal{D}_{i,k}$ and $Z_r \sim \mathcal{D}_{i,k}^{(r)}$, with any arbitrary coupling. Taking expecation in the expression for $R_{i,k}$, we have:

\begin{align*}
\mathbb{E}[R_{i,k}] &=  \frac{1}{n}\sum_{r=1}^{n} \mathbb{E}\left[\langle\nabla f(x^k_i;r) ,x_i^k-x^{*}\rangle\right] -\frac{1}{n}\sum_{r=1}^{n}\mathbb{E}\left[\langle  \nabla f(x^k_i;r),x_i^k-x^{*}\rangle\bigr| \sigma_k(i+1)= r\right] \\
&=  \frac{1}{n}\sum_{r=1}^{n}\mathbb{E} \left[\langle\nabla f(Y;r) ,Y-x^{*}\rangle-\langle  \nabla f(Z_r;r),Z_r-x^{*}\rangle\right] \\
&=  \frac{1}{n}\sum_{r=1}^{n}\mathbb{E} \biggr[\langle\nabla f(Y;r)-\nabla f(Z_r;r)  ,Y-x^{*}\rangle + \langle  \nabla f(Z_r;r),Y-Z_r\rangle\biggr] \\
&\leq \frac{1}{n}\sum_{r=1}^{n}\mathbb{E}[ L\|Y-x^*\|.\|Z_r-Y\| + G\|Z_r-Y\|] \\
&\leq \frac{1}{n}\sum_{r=1}^{n} L\sqrt{\mathbb{E}[\|Y-x^*\|^2]}\sqrt{\mathbb{E}[\|Z_r-Y\|^2]} + G\mathbb{E}[\|Z_r - Y\|]
\end{align*}
We have used smoothness of $f(;r)$ and Cauchy-Schwarz inequality in the fourth step and Cauchy-Schwarz inequality in the fifth step. Since the inequality above holds for every coupling between $Y$ and $Z_r$, we conclude:

\begin{align}
\mathbb{E}[R_{i,k}]  &\leq \frac{1}{n}\sum_{r=1}^{n} L \dwtwo\left(\mathcal{D}_{i,k},\mathcal{D}_{i,k}^{(r)}\right) \sqrt{\mathbb{E}[\|x_i^k-x^*\|^2]}+ G\dwtwo\left(\mathcal{D}_{i,k},\mathcal{D}_{i,k}^{(r)}\right)\nonumber \\
&\leq \frac{1}{n}\sum_{r=1}^{n} \frac{L^2}{\mu} \left[\dwtwo\left(\mathcal{D}_{i,k},\mathcal{D}_{i,k}^{(r)}\right)\right]^2  +  \frac{\mu}{4}\mathbb{E}[\|x_i^k-x^*\|^2] + G\dwtwo\left(\mathcal{D}_{i,k},\mathcal{D}_{i,k}^{(r)}\right)
\label{eq:some_intermediate_equation}
\end{align}

 by our hypethesis we have $\alpha \leq \frac{2}{L}$. So we can apply Lemma~\ref{lem:wasserstein_stability_bound}. Equation~\eqref{eq:some_intermediate_equation} along with equation~\eqref{eq:fundamental_equation} implies:

\begin{align*}
&\mathbb{E}\|x_{i+1}^{k}-x^{*}\|^2 \\&\leq   \mathbb{E}\|x_i^k-x^{*}\|^2(1-\alpha\mu) -2\alpha\mathbb{E}\left[F(x^k_i)- F(x^*)\right]+2\alpha\mathbb{E}
R_{i,k} +\alpha^2G^2  \\
&\leq  \mathbb{E}[\|x_i^k-x^{*}\|^2]\left(1-\tfrac{\alpha\mu}{2}\right)- 2\alpha\mathbb{E}\left[F(x^k_i)- F(x^*)\right] + 3G^2\alpha^2 + \frac{4L^2G^2\alpha^3}{\mu}
\end{align*}

We use the fact that $F(x^k_i)- F(x^*) \geq 0$ and unroll the recursion above to conclude:

\begin{align*}
\mathbb{E}[\|x_{0}^{k+1}-x^{*}\|^2] &\leq \left(1-\tfrac{\alpha\mu}{2}\right)^{nk}\|x_0^1-x^{*}\|^2 +\sum_{t=0}^{\infty}\left(1-\tfrac{\alpha\mu}{2}\right)^t\left[3G^2\alpha^2 + \tfrac{4L^2G^2\alpha^3}{\mu}\right]\\
&= \left(1-\tfrac{\alpha\mu}{2}\right)^{nk}\|x_0^1-x^{*}\|^2 +\left[\tfrac{6G^2\alpha}{\mu} + \tfrac{8L^2G^2\alpha^2}{\mu^2}\right]\\
&\leq e^{-\tfrac{n\alpha k \mu}{2}}\|x_0^1-x^{*}\|^2 +\left[\tfrac{6G^2\alpha}{\mu} + \tfrac{8L^2G^2\alpha^2}{\mu^2}\right]\\
\end{align*}

Using the fact that $\alpha =\min\left(\tfrac{2}{L},4l \tfrac{\log{nK}}{\mu nK}\right)$, we conclude that when $k \geq \frac{K}{2}$, 

\begin{equation}
\mathbb{E}[\|x_{0}^{k+1}-x^{*}\|^2] \leq \frac{\|x_0^1-x^{*}\|^2}{(nK)^l} + \left[\tfrac{6G^2\alpha}{\mu} + \tfrac{8L^2G^2\alpha^2}{\mu^2}\right]
\label{eq:distance_bound_midway}
\end{equation}

We can easily verify that equation~\ref{eq:no_strong_convex_equation} also holds in this case (because all other assumptions hold). Therefore, for $k \geq \frac{K}{2}$,

\begin{align*}
\mathbb{E}[\|x_{i+1}^{k}-x^{*}\|^2] &\leq \mathbb{E}[\|x_i^k-x^{*}\|^2] - 2\alpha \mathbb{E}[F(x^k_i)-F(x^{*})] + 5\alpha^2G^2
\end{align*}

Summing this equation for  $0\leq i \leq n-1$, $\lceil \tfrac{K}{2}\rceil \leq k\leq K$, we conclude:

\begin{align*}
 \tfrac{1}{n(K-\lceil\tfrac{K}{2}\rceil +1)}\sum_{k=\lceil\tfrac{K}{2}\rceil}^{K}\sum_{i=0}^{n-1} \mathbb{E}(F(x^k_i)-F(x^{*})) &\leq  \tfrac{1}{2n\alpha (K-\lceil\tfrac{K}{2}\rceil +1)}\mathbb{E}\bigr\|x_0^{\lceil\tfrac{K}{2}\rceil}-x^{*}\bigr\|^2  + \frac{5}{2}\alpha G^2
 \\&= O\left(\mu\tfrac{\|x_0^1-x^*\|^2}{(nK)^l} + L\tfrac{\|x_0^1-x^*\|^2}{(nK)^{(l+1)}}\right) + O\left(\tfrac{G^2\log{nK}}{\mu nK} + \tfrac{L^2G^2\log{nK}}{\mu^3 n^2K^2}\right)
\end{align*}
In the last step we have used Equation~\eqref{eq:distance_bound_midway} and the fact that $\alpha \leq \tfrac{4l \log{nK}}{\mu nK}$ and $\tfrac{1}{\alpha} \leq \tfrac{L}{2} + \tfrac{nK\mu }{4l\log{nK}}$. Using convexity of $F$, we conclude that:

$$F(\hat{x}) \leq \tfrac{1}{n(K-\lceil\tfrac{K}{2}\rceil +1)}\sum_{k=\lceil\tfrac{K}{2}\rceil}^{K}\sum_{i=0}^{n-1} F(x^k_i)\,.$$
This proves the result.
\section{Proofs of useful lemmas}\label{app:lem}
\begin{proof}[Proof of Lemma~\ref{lem:stability}]
	For simplicity of notation, we denote $y_i \defeq x_i(\sigma_k)$ and $z_i \defeq x_i(\sigma'_k)$.
	We know that $\|y_0-z_0\| =0$ almost surely by definition. Let $j < i$. First we Suppose $\tau_y(j+1) = r \neq s =  \tau_z(j+1)$. Then, by Lemma~\ref{lem:convex_contraction} 
	\begin{align*}
	&\|y_{j+1}-z_{j+1}\| \\&= \bigr\|\Pi_{\mathcal{W}}\left(y_j - \alpha_{k,j} \nabla f(y_j;r )\right) - \Pi_{\mathcal{W}}\left(z_j - \alpha_{k,j} \nabla f(z_j;s )\right)\bigr\| \\
	&\leq \|y_j -z_j - \alpha_{k,j} \left(\nabla f(y_j;r) -\nabla f(z_j;s ) \right)\|\\
	&\leq \|y_j - z_j\| + \alpha_{k,j}\|\nabla f(y_j;r)\| +\alpha_{k,j}\|\nabla f(z_j;s )\| \\
	&\leq 2G\alpha_{k,j} + \|y_j-z_j\| \\
	&\leq 2G\alpha_{k,0} + \|y_j-z_j\|
	\end{align*}
	In the last step above, we have used monotonicity of $\alpha_t$. Now, suppose $\tau_y(j+1) = \tau_z(j+1) = r$. Then,
	\begin{align*}
	\|y_{j+1}-z_{j+1}\|^2 &= \bigr\|\Pi_{\mathcal{W}}\left(y_j - \alpha_{k,j} \nabla f(y_j;r)\right) - \Pi_{\mathcal{W}}\left(z_j - \alpha_{k,j} \nabla f(z_j;r)\right)\bigr\|^2 \\
	&\leq \|\left(y_j - \alpha_{k,j} \nabla f(y_j;r)\right) - \left(z_j - \alpha_{k,j} \nabla f(z_j;r)\right)\|^2 \\
	&= \|y_j-z_j\|^2 -2\alpha_{k,i} \langle\nabla f(y_j;r) - \nabla f(z_j;r),y_j-z_j\rangle + \alpha_{k,j}^2\|\nabla f(y_j;r) - \nabla f(z_j;r)\|^2 \\
	&\leq \|y_j-z_j\|^2 -(2\alpha_{k,j} -  L\alpha_{k,j}^2) \langle\nabla f(y_j;r) - \nabla f(z_j;r),y_j-z_j\rangle \\
	&\leq \|y_j-z_j\|^2
	\end{align*}
	In the second step we have used Lemma~\ref{lem:smooth_convex_property} and in the third step we have used the fact that when $\alpha_{k,0} \leq \frac{2}{L}$, $2\alpha_{k,i} -  L\alpha_{k,i}^2 \geq 0$ and $\langle\nabla f(y_i;r) - \nabla f(z_i;r),y_i-z_i\rangle \geq 0$ by convexity. This proves the lemma.	
\end{proof}

\begin{proof}[Proof of Lemma~\ref{lem:temporal_regularity}]
	For the sake of clarity of notation, in this proof we take $R_{j} := \sigma_k(j)$ for all $j \in [n]$.
	By defintion, $x_{j+1}^k - x_0^k = \Pi_{\mathcal{W}} \left(x_j^k - \alpha_{k,j}\nabla f(x_j^k;R_{j+1})\right) - x_0^{k}$. Taking norm squared on both sides, we have:
	
	\begin{align*}
	&\|x_{j+1}^k - x_0^k\|^2 \\&\leq \|x_j^k - x_0^k\|^2 - 2 \alpha_{k,j}\langle f(x_j^k;R_{j+1}, x_j^k-x_0^k\rangle + \alpha^2_{k,j}G^2 \\
	&\leq  \|x_j^k - x_0^k\|^2 + 2 \alpha_{k,j}\left(f(x_0^k;R_{j+1}) - f(x_j^k;R_{j+1})\right)+ \alpha^2_{k,j}G^2
	\end{align*}
	Taking expectation on both sides, we have:
	\begin{align*}
	\mathbb{E}[\|x_{j+1}^k - x_0^k\|^2] &\leq \mathbb{E}[\|x_j^k - x_0^k\|^2] + \alpha^2_{k,j}G^2 +2 \alpha_{k,j}\mathbb{E}\left[f(x_0^k;R_{j+1}) - f(x_j^k;R_{j+1})\right]\\
	&= \mathbb{E}[\|x_j^k - x_0^k\|^2] +2 \alpha_{k,j}\mathbb{E}\left[F(x_0^k) - f(x_j^k;R_{j+1})\right] + \alpha^2_{k,j}G^2 \\
	&=  \mathbb{E}[\|x_j^k - x_0^k\|^2] +2 \alpha_{k,j}\mathbb{E}\left[F(x_0^k) - F(x_j^k)\right]+ 2\alpha_{k,j}\mathbb{E}\left[F(x_j^k)-f(x_j^k;R_{j+1})\right] + \alpha^2_{k,j}G^2 \\
	&\leq \mathbb{E}[\|x_j^k - x_0^k\|^2] +2 \alpha_{k,j}\mathbb{E}\left[F(x_0^k) - F(x_j^k)\right] + 4\alpha_{k,j}\alpha_{k,0}G^2 + \alpha^2_{k,j}G^2 \\
	&\leq \mathbb{E}[\|x_j^k - x_0^k\|^2] +2 \alpha_{k,j}\mathbb{E}\left[F(x_0^k) - F(x^*)\right] +4\alpha_{k,j}\alpha_{k,0}G^2 + \alpha^2_{k,j}G^2 \\
	\end{align*}
	In the fourth step we have used Lemma~\ref{lem:wasserstein_stability_bound} and in the fifth step, we have used the fact that $x^{*}$ is the minimizer of $F$. We sum the equation above from $j=0$ to $j=i-1$ and use the fact that $\alpha_{k,0}\geq \alpha_{k,j}$ and that $\|x_j^k -x_0^k\| =0$ when $j=0$ to conclude the result. For the proof of the second equation in the lemma, we use $x^*$ instead of $x_0^k$ above and go through similar steps.
\end{proof}

\end{document}